\theoremstyle{plain}
\newtheorem{theorem}{Theorem}[section]
\newtheorem{lemma}[theorem]{Lemma}
\newtheorem{cor}[theorem]{Corollary}
\theoremstyle{definition}
\newtheorem{remark}[theorem]{Remark}
\numberwithin{equation}{section}
\def\be{\begin{equation}}
\def\ee{\end{equation}}
\begin{document}

\title[Minimal Graphs in the Hyperbolic Space]
{Boundary Regularity of Minimal Graphs\\ in the Hyperbolic Space}
\author[Qing Han]{Qing Han}
\address{Department of Mathematics\\
University of Notre Dame\\
Notre Dame, IN 46556, USA} \email{qhan@nd.edu}
\author[Xumin Jiang]{Xumin Jiang}
\address{Department of Mathematics\\
Fordham University\\
Bronx, NY 10458, USA}  \email{xjiang77@fordham.edu}

\begin{abstract}
F.-H. Lin \cite{Lin1989Invent}  studied minimal graphs of the Dirichlet problem 
in the hyperbolic space and proved that any such minimal graph has the same global regularity
as the boundary if 
the dimension of the minimal graph is even and that there is an obstacle to the higher regularity 
if the dimension is odd. We discuss the odd dimension case and study how the higher regularity is 
obstructed. 
We introduce the logarithm of the distance to the boundary 
as an additional independent self-variable and establish concise boundary regularity for
the minimal graph.
\end{abstract}

\maketitle

\section{Introduction}\label{sec-Intro}

Complete minimal hypersurfaces in the hyperbolic space
$\mathbb H^{n+1}$ have been studied from different perspectives. 
Anderson \cite{Anderson1982Invent}, \cite{Anderson1983} 
proved the existence of area-minimizing integral $n$-currents in $\mathbb H^{n+1}$
which are asymptotic to given closed embedded $(n-1)$-dimensional submanifolds 
at the infinity of $\mathbb H^{n+1}$. 
These currents are  embedded smooth submanifolds
if $n\le 6$ and may have closed singular sets of Hausdorff dimension at most $n-7$ if $n\ge 7$. 
Hardt and 
Lin \cite{Hardt&Lin1987}  discussed the $C^1$-boundary regularity of such currents. 
In a special setting, Lin \cite{Lin1989Invent} studied the higher boundary regularity.

Assume $\Omega$ is a bounded domain in $\mathbb{R}^n$. 
Lin \cite{Lin1989Invent} studied the 
Dirichlet problem 
\begin{align}\label{Eqf}
\Delta f - \frac{f_i f_j}{1+|D f|^2}f_{ij}+\frac{n}{f} &=0  \quad \text{in } \Omega, \\
\label{EqfCondition} f &=0 \quad \text{on } \partial \Omega.
\end{align}
If $\Omega$ is a $C^2$-domain in $\mathbb R^n$ with 
a nonnegative boundary mean curvature  
$H_{\partial\Omega}\ge 0$ with respect to the inward normal of 
$\partial\Omega$, then \eqref{Eqf} and \eqref{EqfCondition}
admit a unique solution $f\in C(\bar\Omega)\cap C^\infty(\Omega)$ with $f>0$ in $\Omega$. 
Moreover,  the graph of $f$ 
is a complete minimal hypersurface in the hyperbolic space $\mathbb H^{n+1}$
with the asymptotic boundary $\partial\Omega$. 
At each point 
of the boundary, the gradient of $f$ blows up and hence the graph of $f$ 
has a vertical tangent plane. Han, Shen and Wang \cite{Han16CalVar} proved the optimal regularity  
$f\in C^{\frac{1}{n+1}}(\bar\Omega)$. 
Concerning the higher regularity of the graph of $f$, 
Lin \cite{Lin1989Invent}  proved that {\it if $\partial\Omega$ is $C^{n,\alpha}$ for some $\alpha\in (0,1)$, 
then the graph of $f$ is $C^{n,\alpha}$ up to the boundary}. 
Moreover,   Lin \cite{Lin1989Invent} and Tonegawa \cite{Tonegawa1996MathZ}
proved that 
{\it if $\partial\Omega$ is smooth, then the graph of $f$ is smooth up to the boundary 
if the dimension $n$ is even or if the dimension $n$ is odd and 
the principal curvatures of $\partial\Omega$ satisfy a differential equation of order $n+1$.}
See also \cite{Lin2012Invent}.

To study the higher regularity of the graph of $f$, Lin \cite{Lin1989Invent} introduced a new equation. 
Locally near each boundary point, the graph of $f$ can be represented by a function 
over its vertical tangent plane. Specifically, 
we fix a boundary point of $\Omega$, say the origin, and assume that 
$e_n=(0,\cdots, 0,1)$ is the interior normal vector to $\partial\Omega$ 
at the origin. With $x=(x',x_n)$, the $x'$-hyperplane is the tangent plane of 
$\partial\Omega$ at the origin, and the boundary $\partial\Omega$ can be expressed 
in a neighborhood of the origin as a graph of a function over $\mathbb R^{n-1}\times\{0\}$, 
say $x_n=\varphi(x').$
We now denote points in $\mathbb R^{n+1}=
\mathbb R^n\times\mathbb R$ by $(x',x_n,y_n)$.
With $y'=x'$, we represent the graph of $f=f(x', x_n)$ by the graph of a new function $u=u(y', y_n)$,
for small $y'$ and $y_n$, with $y_n>0$. 
Then, for some $R>0$,
$u$ satisfies 
\begin{align}\label{eq-Intro-Equ}
\Delta u - \frac{u_i u_j}{1+|D u|^2}u_{ij}-\frac{n u_{n}}{y_n}&=0  \quad \text{in } B_R^+,\\
\label{eq-Intro-EquCondition}
u&=\varphi\quad\text{on }B_R'.
\end{align}
Lin and Tonegawa proved their regularity results by studying \eqref{eq-Intro-Equ}-\eqref{eq-Intro-EquCondition}. 
In particular, if $\varphi$ is smooth, then $u$ is smooth up to $\{y_n=0\}$ if $n$ is even 
and is $C^{n,\alpha}$ up to $\{y_n=0\}$, for any $\alpha\in (0,1)$, if $n$ is odd.

In this paper, we study obstructions to the higher regularity in the odd dimension 
and aim to prove concise regularity results. 
We present two results under the assumption of smoothness and finite differentiability, respectively.

\begin{theorem}\label{thrm-Main-infinite}
Let $n\ge 3$ be an odd integer,  $\varphi\in C^{\infty}(B'_R)$ be a given function, 
and $u\in C(\bar B^+_R)\cap C^\infty(B^+_R)$ be 
a solution of \eqref{eq-Intro-Equ}-\eqref{eq-Intro-EquCondition}. 
Then, $u$ is smooth in $y', y_n$, and $y_n^n\log y_n$ in $\bar{B}_r$,  
for  any $r\in (0,R)$. 
If, in addition, $\partial_{y_n}\partial_{y_n^{n}\log y_n}u|_{y_n=0}=0$ on $B_R'$, 
then $u\in C^{\infty}(\bar{B}^+_r)$,  
for any $r\in (0,R)$. 
\end{theorem} 

Theorem \ref{thrm-Main-infinite} asserts a concise boundary regularity for $u$ if we view
$y_n^n\log y_n$ as an additional self-variable. 
The formulation of Theorem \ref{thrm-Main-infinite} is inspired by our work in \cite{Han&Jiang-analyticity}
on a conjecture by Lin \cite{Lin1989Invent} concerning the analyticity of the minimal surfaces up to boundary. 
If $\varphi$ is analytic in $B_R'$, we proved that solutions of \eqref{eq-Intro-Equ}-\eqref{eq-Intro-EquCondition} 
are analytic in $y', y_n$ up to $\{y_n=0\}$, if $n$ is even, and are analytic in $y', y_n, y^n_n\log y_n$, if $n$ is odd.

\begin{theorem}\label{thrm-Main-finite}
Let $n\ge 3$ be an odd integer, 
$\varphi\in C^{\ell,\alpha}(B'_R)$ be a given function, 
for some integer $\ell\ge n+1$ and some constant $\alpha\in (0,1)$,  
and $u\in C(\bar B^+_R)\cap C^\infty(B^+_R)$ be 
a solution of \eqref{eq-Intro-Equ}-\eqref{eq-Intro-EquCondition}. 
Assume $m<\ell/n\le m+1$ for some $m\ge 1$. 
Then, 
there exist functions  $w_0, w_1, \cdots, w_m\in C^{\ell, \epsilon}(\bar{B}^+_r)$,  
for any $\epsilon\in (0,\alpha)$ and any $r\in (0,R)$, 
such that
\begin{equation}\label{eq-Main-decomposition-finite-Int}
u=w_0+w_1\log y_n+\cdots+w_m(\log y_n)^m\quad\text{in }B_R^+,\end{equation}
and, for each $j=1, \cdots, m$, 
\begin{equation}\label{eq-Main-vanishing-finite-Int}
\partial_{n}^iw_j(\cdot, 0)=0\quad\text{on }B_R'\quad\text{for any }0\le i\le jn.
\end{equation}
If, in addition, $\partial_n^{n+1}w_1(\cdot, 0)=0$ on $B_R'$, then $w_1, \cdots, w_m$ 
are identically zero and $u\in C^{\ell, \epsilon}(\bar{B}^+_r)$,  
for any $\epsilon\in (0,\alpha)$ and any $r\in (0,R)$. 
\end{theorem} 

The main assertion in Theorem \ref{thrm-Main-finite} is the regularity of 
the functions $w_0, w_1, \cdots, w_m$, which are almost as regular as $\varphi$, 
with a slight loss  from $C^{\ell,\alpha}$ to $C^{\ell, \epsilon}$ for any $\epsilon\in (0,\alpha)$. 

Theorem \ref{thrm-Main-infinite} and Theorem \ref{thrm-Main-finite} 
describe precisely how logarithms obstruct the higher regularity 
and  assert that $\log y_n$ 
and its powers provide the {\it only} obstacles. 

We now compare our results with known results for similar problems. 
The singularity of the equation \eqref{eq-Intro-Equ} is mainly responsible for the lack of the higher regularity. 
For equations with similar singularity, regularity of solutions is usually studied in the form of 
asymptotic expansions near boundary. These expansions are 
established for the singular Yamabe problem 
in \cite{ACF1982CMP, Loewner&Nirenberg1974, Mazzeo1991}, 
the complex Monge-Amp\`{e}re equations in \cite{ChengYau1980CPAM, Fefferman1976, LeeMelrose1982}, 
the asymptotically hyperbolic Einstein metrics
in \cite{Anderson2003, Biquad2010, Chrusciel2005, Hellimell2008}, and many other problems. 
All these expansions contain logarithms of the distance function to the boundary and 
are proved mostly in the smooth category. 
We point out that establishing the expansions means to estimate remainders. 

If $\varphi$ is smooth, following Graham and Witten \cite{Graham&Witten1999},  
we can 
write a formal expansion for solutions of \eqref{eq-Intro-Equ}-\eqref{eq-Intro-EquCondition} for the odd $n$
in the following form: for some smooth functions $c_i$ and $c_{i,j}$ in $B_R'$,
\begin{align}\label{b1b-v}u=\varphi+\sum_{i=2,\text{even}}^{n-1}c_iy_n^i
+\sum_{i=n+1}^\infty
\sum_{j=0}^{[(i-1)/n]}c_{i,j} y_n^i (\log y_n)^j\quad\text{in }B^+_R.
\end{align}
As a consequence of Theorem \ref{thrm-Main-infinite}, 
by considering the Taylor expansion of $u$ in terms of $y_n$ and $y_n^n\log y_n$
with coefficients given by smooth functions of $y'$, we obtain remainder estimates 
associated with the infinite series \eqref{b1b-v}. 
Moreover, in Theorem \ref{thrm-Main-finite}, by considering 
Taylor expansions of $w_0, \cdots, w_m$ up to order $\ell$, we obtain 
\begin{equation}\label{eq-expansion-finite}
u=\varphi+\sum_{i=2,\text{even}}^{n-1}c_iy_n^i
+\sum_{i=n+1}^\ell\sum_{j=0}^{[(i-1)/n]}c_{i,j}y_n^i(\log y_n)^j+R_\ell\quad\text{in }B^+_R,\end{equation}
for some functions $c_i\in C^{\ell-i,\alpha}(B_R')$ for $2\le i\le n-1$, 
$c_{i,j}\in C^{\ell-i,\epsilon}(B_R')$, and $R_\ell\in C^{\epsilon}(\bar{B}^+_r)$, 
for any $\epsilon\in (0,\alpha)$ and $r\in (0,R)$.  Under the assumption of the finite differentiability, 
the expansion \eqref{eq-expansion-finite} is optimal in the order of expansion 
and almost optimal in the regularity of coefficients and the remainder, 
with a slight loss only in the H\"older index.

In \eqref{b1b-v} or \eqref{eq-expansion-finite}, the coefficients  
$c_2, \cdots, c_{n-1}, c_{n+1,1}$ 
have explicit expressions in terms of $\varphi$  and are referred to as
{\it local terms}. 
For example, we have, 
for $n=3$, 
\be\label{eq-Intro-c_31}
c_{4,1}=-\frac{1}{8}\sqrt{1+|D_{y'}\varphi|^2}\big\{\Delta_{\Sigma} H+2H(H^2-K)\big\},\ee
where $H$ and $K$ are the mean curvature and the Gauss curvature of
the graph $\Sigma$ given by $x_n=\varphi(y')$, 
respectively. 
The coefficient 
$c_{n+1,0}$ 
is the first nonlocal coefficient and exhibits a slight loss of regularity in the H\"older index. 


The most part of the paper is devoted to the study of a class of quasilinear elliptic equations more general than 
\eqref{eq-Intro-Equ}. Theorem \ref{thrm-Decomposition-finite} and Theorem \ref{thrm-Decomposition-infinite}
 should be considered as the main results
in this paper and can be applied to solutions of \eqref{eq-Intro-Equ}. 
Theorem \ref{Thm-MainThm} plays a fundamental role in our study. 
Results in this paper are established based on PDE techniques, 
such as barrier functions and 
scalings. 
With this approach, 
we are able to track the regularity of coefficients and the remainder of the expansion
and present the estimate of the remainder under the assumption of the optimal regularity, 
as shown in Theorem \ref{Thm-MainThm}.

We finish the introduction with a brief outline of the paper. 
In Section \ref{sec-Setup}, we rewrite the equation \eqref{eq-Intro-Equ} based on a fundamental estimate 
proved by Lin \cite{Lin1989Invent}. 
In Section \ref{Sec-TangentialSmooth}, 
we discuss a class of 
quasilinear elliptic equations with singularity and prove the tangential smoothness near boundary. 
In Section \ref{sec-NormalRegularity}, we treat quasilinear elliptic equations as 
ordinary differential equations and prove the regularity along the normal direction. 
In Section \ref{Sec-Decomposition}, we prove two decomposition results for solutions near boundary. 

We would like to thank Robin Graham, Fang-Hua Lin, and Rafe Mazzeo 
 for many helpful comments and suggestions. 

\section{The Setup}\label{sec-Setup}

In this section, we will rewrite the equation \eqref{eq-Intro-Equ} based on a fundamental estimate 
proved by Lin \cite{Lin1989Invent}.

We denote by $y=(y', t)$ points in $\mathbb R^n$, 
with $y_n=t$, and set, for any $r>0$, 
$$G_r=\{(y', t):\, |y'|<r,\, 0<t<r\}.$$
For some $\varphi\in C^2(B_1')$, we consider a solution 
$u\in C(\bar G_1)\cap C^\infty(G_1)$ of \eqref{eq-Intro-Equ}-\eqref{eq-Intro-EquCondition}, i.e.,  
\begin{align*}
\Delta u - \frac{u_i u_j}{1+|D u|^2}u_{ij}-\frac{n u_{t}}{t}&=0  \quad \text{in } G_R,\\
u&=\varphi\quad\text{on }B_R'.
\end{align*}

\begin{lemma}\label{Lemma-TangentialGradientDecay_u} 
Assume $\varphi\in C^2(B_1')$ and let $u\in C(\bar G_R)\cap C^\infty(G_R)$ be 
a solution of  \eqref{eq-Intro-Equ}
and \eqref{eq-Intro-EquCondition}. 
Then, for any $(y',t)\in G_{R/2}$, 
\begin{align} 
\label{GE2a0}
|(u-\varphi)(y',t)|&\le C_0t^2,\\
\label{GE2a1}
|D(u-\varphi)(y',t) |&\le C_0t,\\
\label{GE2a2}
|D^2(u-\varphi)(y',t) |&\le C_0,
\end{align}
where $C_0$ is a positive constant depending only on $n$, $|u|_{L^\infty(G_R)}$,
and $|\varphi|_{C^{2}(\bar G_R)}$. 
\end{lemma} 

Lemma \ref{Lemma-TangentialGradientDecay_u} is a consequence of Theorem 3.4 and Remark 3.8 in \cite{Lin1989Invent}. 

In view of Lemma \ref{Lemma-TangentialGradientDecay_u}, it is more convenient to study $u-\varphi$. 
Set 
\be\label{eq-NewFunction-g1}v=u-\varphi.\ee
Then, 
\begin{align}\label{eq-Equ-v}
\Delta v - \frac{u_i u_j}{1+|D u|^2}v_{ij}-\frac{n v_{t}}{t}
+\Delta \varphi- \frac{u_i u_j}{1+|D u|^2}\varphi_{ij}=0  \quad \text{in } G_R.
\end{align}
We rewrite \eqref{eq-Equ-v} as 
\begin{equation}\label{eq-Equ-v-general}A_{ij} v_{ij}+P \frac{v_t}{t}+Q \frac{v}{t^2}+N=0\quad\text{in }G_R,
\end{equation} 
where 
\begin{align*}
A_{ij}=\delta_{ij}-\frac{(v+\varphi)_i(v+\varphi)_j}{1+|D(v+\varphi)|^2},\quad
P=-n, \quad Q=0,\end{align*} 
and 
$$N=\Delta \varphi- \frac{(v+\varphi)_i(v+\varphi)_j}{1+|D(v+\varphi)|^2}\varphi_{ij}.$$
It is easy to check 
$$2A_{nn}+2P+Q\le -2(n-1),$$
and, for any $\alpha\in (0,1)$, 
$$(2+\alpha)(1+\alpha)A_{nn}+(2+\alpha)P+Q\le (1+\alpha)(1+\alpha-n).$$
By \eqref{GE2a0}-\eqref{GE2a2}, we have 
\be\label{eq-GE2a-general}
|v|\le C_0t^2, \ |Dv|\le C_0t,\ |D^2v|\le C_0\quad\text{in }G_{R/2}.\ee
In Section \ref{Sec-TangentialSmooth}, we will prove the tangential regularity 
by using the equation \eqref{eq-Equ-v-general}. 

Based on \eqref{eq-Equ-v}, 
a straightforward calculation shows that $v$ satisfies 
\begin{align}\label{eq-ODE_v}
v_{tt}+\frac{p}{t}v_t+\frac{q}{t^2}v+F=0,
\end{align}
where 
$$p=-n,\quad q=0,$$ 
and 
\begin{align}\label{eq-ODE_vF}\begin{split}
F=F(v)&=\Big(\delta_{\alpha\beta}-\frac{u_\alpha u_\beta}{1+|D_{y'}u|^2}\Big)
(v_{\alpha\beta}+\varphi_{\alpha\beta})
-\frac{2u_\alpha}{1+|D_{y'}u|^2}v_tv_{\alpha t} \\
&\qquad+\frac{v_t^2}{1+|D_{y'}u|^2}(\Delta_{y'}v+\Delta_{y'}\varphi)
-\frac{nv_t}{1+|D_{y'} u|^2}\frac{v_t^2}{t}.
\end{split}\end{align}
Then, $F$ depends on $y'$ through derivatives of $\varphi$ up to order 2 
and $F$ is a smooth function in $t$,  
$v_t, D_{y'}v_t, 
D^2_{y'}v,$ and ${v_t^2}/{t}$.
Moreover, $F$ is linear in ${v_t^2}/{t}.$
In Section \ref{sec-NormalRegularity}, we will study the regularity along the normal direction
by using the equation \eqref{eq-ODE_v}. 
Set
$$\underline{m}=0, \quad \overline{m}=n+1.$$
Then, $p=1-(\underline{m}+\overline{m})$ and $q=\underline{m}\cdot\overline{m}$. 
These simple algebraic relations play fundamental roles in Section \ref{sec-NormalRegularity}. 

As we see, in converting \eqref{eq-Intro-Equ} to \eqref{eq-Equ-v-general} and \eqref{eq-ODE_v}, 
all assumptions in Sections \ref{Sec-TangentialSmooth}-\ref{Sec-Decomposition} hold. 
Therefore, we obtain Theorem \ref{thrm-Main-finite} by Theorem \ref{thrm-Decomposition-finite} and 
obtain Theorem \ref{thrm-Main-infinite} by Theorem \ref{thrm-Decomposition-infinite}.

\section{Tangential Smoothness}\label{Sec-TangentialSmooth}

In the present and the next sections, we study a class of 
quasilinear elliptic equations with singularity and discuss the regularity  
of solutions near boundary. We study the regularity along tangential directions in this section and 
along the normal direction in the next section. 
We follow Lin \cite{Lin1989Invent} closely for the proof of the tangential regularity, by 
the maximum principle and rescaling. 

We denote by $y=(y', t)$ points in $\mathbb R^n$, with $y_n=t$, and, set, for any constant $r>0$, 
$$G_r=\{(y',t):\, |y'|<r,\, 0<t<r\}.$$
Throughout this section, we denote by $Du$ and $D_{y'}u$ 
derivatives with respect to $(y', t)$ and $y'$, respectively. 

For a fixed $R>0$,  let $A_{ij}, P, Q$, and $N$ be given functions of $(y',t,p,s)$ with $(y',t)\in \bar{G}_R$, 
$p\in \mathbb R^{n}$, and $s\in \mathbb R$. 
We always assume that $A_{ij}, P, Q$, and $N$ and their derivatives with respect to $y'$ up to a certain order 
are smooth in $t,p,s$ and all the corresponding derivatives are bounded in 
$\bar{G}_R\times\mathbb R^{n+1}.$

Suppose $v\in C(\bar G_R)\cap C^\infty(G_R)$ satisfies  
\begin{align}\label{eq-QuasiMain}
A_{ij} v_{ij}+P \frac{v_t}{t}+Q \frac{v}{t^2}+N=0\quad\text{in }G_R, 
\end{align}
where $A_{ij}=A_{ij}(y',t,Dv, {v}/{t})$ and $P, Q$, and $N$ have similar forms. 
We assume \eqref{eq-QuasiMain} is uniformly elliptic; namely, there exists a positive constant 
$\lambda$ such that, for 
any $\xi\in\mathbb R^n$, 
\begin{equation}\label{eq-ellipticity} 
\lambda^{-1}|\xi|^2\le A_{ij}\xi_i\xi_j\le \lambda|\xi|^2\quad\text{in }{G}_R\times\mathbb R^{n+1}.\end{equation}
Concerning the solution $v$, we assume, for some positive constant $C_0$,  
\be\label{eq-QuasiCondition}
|v|\le C_0t^2,\ |Dv|\le C_0t,\ |D^2v|\le C_0\quad\text{in }G_R.\ee

Now, we derive an estimate of derivatives near boundary by a rescaling method, 
which reduces global estimates to local ones. 

\begin{theorem}\label{thrm-TangentialEstimate}
Assume $A_{ij}, P, Q$, and $N$ are $C^{\ell,\alpha}$ in $y'$, for some 
$\ell\ge 0$ and $\alpha\in (0,1)$, and smooth in other arguments. 
Let $v\in C^{1,1}(\bar G_R)\cap C^{\ell+2, \alpha}(G_R)$ be a solution of \eqref{eq-QuasiMain} in $G_R$, 
for some $R>0$, and satisfy \eqref{eq-QuasiCondition}. 
Assume, for some positive constant $c_0$,  
\be\label{eq-NegativeCondition}2 A_{nn}+2P+Q\le -c_0\quad\text{in }{G}_R\times\mathbb R^{n+1}.\ee
Then, for any $r\in (0,R)$ and any  $\tau=0, 1, \cdots, \ell$, 
\be\label{eq-BoundTangentialGradient_vv}
\frac{D^\tau_{y'}v}{t^2},\, \frac{DD^\tau_{y'}v}{t},\, D^2D^\tau_{y'}v\in L^\infty(G_{r}),\ee
and 
\be\label{eq-BoundTangentialGradient_vv2}
\frac{D^\tau_{y'}v}{t},\, D^\tau_{y'}Dv, \, \frac{D^\tau_{y'}(v^2)}{t^3}, \,
\frac{D^\tau_{y'}(vv_t)}{t^2}, \, 
\frac{D^\tau_{y'}(v_t^2)}{t}\in C^{0,1}(\bar G_{r}).\ee
Moreover, the cooresponding bounds in \eqref{eq-BoundTangentialGradient_vv}
and \eqref{eq-BoundTangentialGradient_vv2}  depend only on $n$, $\ell$, $\alpha$, $r$, 
$\lambda$ in \eqref{eq-ellipticity}, $C_0$ in \eqref{eq-QuasiCondition}, $c_0$ in \eqref{eq-NegativeCondition}, 
and the $C^{\ell,\alpha}$-norms of 
$A_{ij}, P, Q$ and $N$. 
\end{theorem}

\begin{proof}
We first note that \eqref{eq-BoundTangentialGradient_vv} implies 
\eqref{eq-BoundTangentialGradient_vv2}. In fact, the derivatives of 
the expressions in \eqref{eq-BoundTangentialGradient_vv2} can be expressed 
in terms of functions in \eqref{eq-BoundTangentialGradient_vv}. 
For a later reference, we list 
these relations for $\tau=0$ as follows: 
\begin{align*} 
D\left(\frac{v}{t}\right)&=\frac{Dv}{t}-\frac{v}{t^2}e_n,\\
D(Dv)&=D^2v,\end{align*} 
and 
\begin{align*} 
D\Big(\frac{v^2}{t^3}\Big)&=2\frac{v}{t^2}\cdot \frac{Dv}{t}-3\Big(\frac{v}{t^2}\Big)^2e_n,\\
D\Big(\frac{vv_t}{t^2}\Big)&=\frac{Dv}{t}\cdot\frac{v_t}{t}+\frac{v}{t^2}Dv_t
-2\frac{v}{t^2}\cdot\frac{v_t}{t}e_n,\\
D\Big(\frac{v_t^2}{t}\Big)&=2\frac{v_t}{t}Dv_t-\Big(\frac{v_t}{t}\Big)^2e_n.\end{align*}
For $\tau=0$, \eqref{eq-BoundTangentialGradient_vv} follows from 
\eqref{eq-QuasiCondition}. 
We fix an integer $1\le k\le \ell$ and assume \eqref{eq-BoundTangentialGradient_vv}
holds for $\tau=0, \cdots, k-1$. We now consider the case $\tau=k$ and $r=R/2$.

By applying $D_{y'}^k$ to \eqref{eq-QuasiMain}, we obtain 
\begin{align}\label{TS91}
A_{ij} (D_{y'}^kv)_{ij}+P \frac{(D_{y'}^kv)_t}{t}+Q \frac{D_{y'}^kv}{t^2}+N_k=0,
\end{align}
where $N_k$ is given by 
\be\label{eq-KeyExpressions2}N_k=\sum_{\substack{l+m= k\\ m\le k-1}}
a_{lm}\Big(D_{y'}^lA_{ij}\cdot D_{y'}^mv_{ij}+
D_{y'}^lP\cdot\frac{D_{y'}^mv_t}{t}+D_{y'}^lQ\cdot\frac{D_{y'}^mv}{t^2}\Big)
+D_{y'}^kN,\ee
for some constant $a_{lm}$. Derivatives of $A_{ij}, P, Q$ and $N$ also result in derivatives of 
$v$. In conclusion, $N_k$ is a polynomial of the expressions in 
\eqref{eq-BoundTangentialGradient_vv}, for $\tau\le k-1$, except $Dv$ and $v/t$. 
Then, by the induction hypotheses, $N_k$ is bounded in $G_r$. 
In the following, we set 
$$\mathcal Lw=A_{ij} w_{ij}+\frac{P}{t}w_t+\frac{Q}{t^2}w.$$
Consider, for some positive constants $a$ and $b$ to be determined, 
$$\overline{w}(y',t)=a|y'|^2+bt^2.$$
Note $D_{y'}^kv=0$ on $t=0$ and $\pm D_{y'}^kv\le Ct$ by the induction hypothesis. 
Then, 
we can choose
$a$ and $b$ large such that $D_{y'}^kv\le \overline{w}$ on $\partial G_r$. 
Next, 
\begin{align}
\mathcal L\overline w=(2 A_{nn}+2P+Q) b + 2a\sum_{\alpha=1}^{n-1}A_{\alpha\alpha}.\label{TS72}
\end{align}
The assumption \eqref{eq-QuasiCondition} implies that the coefficients 
of $\mathcal L$ are $C^{0,1}$. 
By \eqref{eq-NegativeCondition} and taking $b$ sufficiently large, 
we obtain $\mathcal L\overline{w}\le 
\mathcal L\mathcal (\pm D_{y'}^kv)$ in $G_r$. 
The maximum principle implies $\pm D_{y'}^kv\le \overline{w}$ in $G_r$. By taking $y'=0$, 
we obtain $\pm D_{y'}^kv(0', t)\le bt^2$ for any $t\in (0,r)$. 
For any fixed $y_0'\in B'_{r}$, we  consider, 
instead of $\overline{w}$, 
$$\overline{w}_{y_0'}(y',t)=a|y'-y_0'|^2+bt^2.$$
By repeating the above argument, we conclude 
$$\pm D_{y'}^kv(y',t)\le bt^2\quad\text{for any }(y',t)\in G_r.$$
Therefore, 
\be\label{eq-QuadraticBound}|D_{y'}^kv|\le Ct^2\quad\text{in } G_r.\ee

Next, we prove 
\be\label{eq-QuadraticBound2}|DD_{y'}^kv|\le Ct,\quad |D^2D_{y'}^kv|\le C\quad\text{in }G_{r/2}.\ee
Take any $y_0=(y_0', t_0)\in G_{r/2}$ and set $\delta=t_0/2$. 
By applying the scaled interior $C^{1,\alpha}$-estimates to \eqref{TS91} in $B_{\delta}(y_0)$, we have 
\begin{align*}&\delta|DD_{y'}^kv|_{L^\infty(B_{3\delta/4}(y_0))}
+\delta^{1+\alpha}[DD_{y'}^kv]_{C^{\alpha}(B_{3\delta/4}(y_0))}\\
&\qquad\le C\big(|D_{y'}^kv|_{L^\infty(B_{\delta}(y_0))}
+\delta^2|N_k|_{L^\infty(B_{\delta}(y_0))}\big),\end{align*}
and hence, by \eqref{eq-QuadraticBound},  
$$|DD_{y'}^kv|_{L^\infty(B_{3\delta/4}(y_0))}
+\delta^{\alpha}[DD_{y'}^kv]_{C^{\alpha}(B_{3\delta/4}(y_0))}\le C_1\delta.$$
As a consequence, the scaled $C^\alpha$-norm of  $N_k$ is uniformly bounded in 
$B_{3\delta/4}(y_0)$. By applying the scaled interior $C^{2,\alpha}$-estimates 
to \eqref{TS91} in $B_{3\delta/4}(y_0)$, we have 
\begin{align*}
&\delta^2|D^2D_{y'}^kv|_{L^\infty(B_{\delta/2}(y_0))}
+\delta^{2+\alpha}[D^2D_{y'}^kv]_{C^\alpha(B_{\delta/2}(y_0))}\\
&\quad\le C\big(|D_{y'}^kv|_{L^\infty(B_{3\delta/4}(y_0))}
+\delta^2|N_k|_{L^\infty(B_{3\delta/4}(y_0))}
+\delta^{2+\alpha}[N_k]_{C^{\alpha}(B_{3\delta/4}(y_0))}\big),\end{align*}
and, in particular,  
$$|D^2D_{y'}^kv|_{L^\infty(B_{\delta/2}(y_0))}
+\delta^{\alpha}[D^2D_{y'}^kv]_{C^\alpha(B_{\delta/2}(y_0))}\le C_1.$$
We hence have \eqref{eq-QuadraticBound2}. 
We conclude the proof of \eqref{eq-BoundTangentialGradient_vv} for $\tau=k$ and $r=R/2$. 
\end{proof} 

There is a loss of regularity in Theorem \ref{thrm-TangentialEstimate}. 
Under the assumptions $A_{ij}, P, Q, N$ 
$\in C^{\ell,\alpha}$, we only proved 
$D_{y'}^\ell u\in C^{1, 1}$. We now prove it is $C^{2,\alpha}$ 
under a slightly strengthened condition on coefficients. 

\begin{theorem}\label{thrm-TangentialEstimateAlpha}
Assume $A_{ij}, P, Q$, and $N$ are $C^{\ell,\alpha}$ in $y'$, for some 
$\ell\ge 0$ and $\alpha\in (0,1)$, and smooth in other arguments. 
Let $v\in C^{1,1}(\bar G_R)\cap C^{\ell+2,\alpha}(G_R)$ be a solution of \eqref{eq-QuasiMain} in $G_R$, 
for some $R>0$, and satisfy \eqref{eq-QuasiCondition}. 
Assume \eqref{eq-NegativeCondition} and, for some constant $c_\alpha>0$,  
\be\label{eq-NegativeConditionAlpha}(2+\alpha)(1+\alpha) A_{nn}
+(2+\alpha)P+Q\le -c_\alpha\quad\text{in }G_R.\ee
Then, for any $r\in (0,R)$ and any  $\tau=0, 1, \cdots, \ell$, 
\be\label{eq-BoundTangentialGradient_vvAlpha}
\frac{D^\tau_{y'}v}{t^2},\, \frac{DD^\tau_{y'}v}{t},\, D^2D^\tau_{y'}v\in C^\alpha(\bar G_{r}),\ee
and 
\be\label{eq-BoundTangentialGradient_vv2Alpha}
\frac{D^\tau_{y'}v}{t},\, D^\tau_{y'}Dv, \, \frac{D^\tau_{y'}(v^2)}{t^3}, \,
\frac{D^\tau_{y'}(vv_t)}{t^2}, \, 
\frac{D^\tau_{y'}(v_t^2)}{t}\in C^{1,\alpha}(\bar G_{r}).\ee
Moreover, the cooresponding bounds in \eqref{eq-BoundTangentialGradient_vvAlpha}
and \eqref{eq-BoundTangentialGradient_vv2Alpha}  depend only on $n$, $\ell$, $\alpha$, $r$, 
$\lambda$ in \eqref{eq-ellipticity}, $C_0$ in \eqref{eq-QuasiCondition}, $c_0$ in \eqref{eq-NegativeCondition}, 
$c_\alpha$ in \eqref{eq-NegativeConditionAlpha}, and the $C^{\ell,\alpha}$-norms of 
$A_{ij}, P, Q$, and $N$. 
\end{theorem}

\begin{proof} We fix $r=R/2$. 

{\it Step 1.} 
We first consider $\ell=0$. We claim, for some $c_2\in C^\alpha(B_R')$, some 
$r\in (0,R)$ and any $(y',t)\in G_{r}$, 
\begin{equation}\label{eq-Alpha1}|v(y',t)-c_2(y')t^2|\le Ct^{2+\alpha}.\end{equation}
The expression of $c_2$ will be given in the proof below. We point out that since $c_2$ is only 
$C^\alpha$, we cannot differentiate $c_2$. For convenience, we set 
$$\mathcal L(v)=A_{ij} v_{ij}+P \frac{v_t}{t}+Q \frac{v}{t^2}, \quad 
\mathcal Q(v)=\mathcal L(v)+N.$$
For some function $c_2$ in $B'_R$ and some function $\psi$ in $G_R$ to be determined, we set 
$$\overline{v}=c_2(0)t^2+\psi.$$
A straightforward calculation yields 
$$\mathcal Q(\overline{v})=\mathcal L(\psi)+(2A_{nn}+2P+Q)c_2(0)+N,$$
where $A_{ij}, P, Q$ and $N$ are evaluated at $y, t, D\overline{v}$, and $\overline{v}/t$. In the following, we take 
\begin{equation}\label{eq-Expression-c_2}
c_2(0)=-\Big(\frac{N}{2A_{nn}+2P+Q}\Big)\Big|_0,\end{equation}
where the right-hand side is evaluated with all of its arguments replaced by zero. 
Hence, by the expression of $\overline{v}$ and the $C^\alpha$-regularity of 
$A_{ij}, P, Q$ and $N$, we have 
\begin{equation}\label{eq-Alpha2}\mathcal Q(\overline{v})\le \mathcal L(\psi)+
C\Big(|y'|^\alpha+t^\alpha+|D\psi|^\alpha+\Big(\frac{\psi}{t}\Big)^\alpha\Big).\end{equation}
Next, set, for some constants $\mu_1$ and $\mu_2$ to be determined, 
$$\psi(y',t)=\mu_1t^2(|y'|^2+t^2)^{\frac\alpha2}+\mu_2t^{2+\alpha}.$$
By a straightforward calculation, we get 
$$\mathcal L(\psi)=\mu_1B_1(|y'|^2+t^2)^{\frac\alpha2}+\mu_2B_2t^\alpha,$$
where 
$$\aligned B_1&=2A_{nn}+2P+Q
+\frac{\alpha}{|y'|^2+t^2}(A_{ab}\delta_{ab}t^2+4A_{an}y_at+5A_{nn}t^2+Pt^2)\\
&\qquad+\alpha(\alpha-2)\frac{t^2}{(|y'|^2+t^2)^2}(A_{ab}y_ay_b+2A_{an}y_at+A_{nn}t^2),
\endaligned$$
and 
$$B_2=(2+\alpha)(1+\alpha) A_{nn}
+(2+\alpha)P+Q.$$ 
Recall the assumptions \eqref{eq-NegativeCondition} and 
\eqref{eq-NegativeConditionAlpha}. 
First, we have 
$B_2\le -c_\alpha$ in $G_R.$
Next, we can 
find a constant $M$ such that $B_1\le -c_0/2$, for any $(y',t)\in G_r$ with $|y'|\ge Mt$.
Hence, for such $(y',t)$, we have 
$$\mathcal L(\psi)\le -\frac12c_0\mu_1(|y'|^2+t^2)^{\frac\alpha2}-c_\alpha\mu_2t^{\alpha}
\le -\frac12c_0\mu_1(|y'|^2+t^2)^{\frac\alpha2}.$$
If $|y'|\le Mt$, then $B_1\le C$ and 
$$\aligned 
\mathcal L(\psi)&\le C\mu_1(|y'|^2+t^2)^{\frac\alpha2}-c_\alpha\mu_2t^{\alpha}
\le -\Big(\frac{c_\alpha\mu_2}{(1+M^2)^{\frac\alpha2}}-C\mu_1\Big)(|y'|^2+t^2)^{\frac\alpha2}\\
&=-c\mu_1(|y'|^2+t^2)^{\frac\alpha2},\endaligned$$
by choosing $\mu_2$ to be a constant multiple of $\mu_1$. 
Therefore, we obtain
\begin{equation}\label{eq-Alpha3}
\mathcal L(\psi)\le -c\mu_1(|y'|^2+t^2)^{\frac\alpha2}\quad\text{in }G_r.\end{equation}
We point out that, if $B_1<0$ in $G_r$, we can simply take $\mu_2=0$ and there is no need to assume 
\eqref{eq-NegativeConditionAlpha}.  
By the explicit expression of $\psi$, we have 
\begin{equation}\label{eq-Alpha4}
|D\psi|^\alpha+\Big(\frac{\psi}{t}\Big)^\alpha
\le C\mu_1^\alpha t^\alpha (|y'|^2+t^2)^{\frac{\alpha^2}2}
\le C\mu_1^\alpha  (|y'|^2+t^2)^{\frac{\alpha}2}.\end{equation}
By \eqref{eq-Alpha2}, \eqref{eq-Alpha3}, and \eqref{eq-Alpha4}, we obtain 
$$\mathcal Q(\overline{v})\le -c\mu_1(|y'|^2+t^2)^{\frac\alpha2}+
C\mu_1^\alpha(|y'|^2+t^2)^{\frac\alpha2}.$$
By $\alpha\in (0,1)$, we can take $\mu_1$ sufficiently large such that 
$$\mathcal Q(\overline{v})\le 0\quad\text{in }G_r.$$
We now compare $v$ and $\overline{v}$ on $\partial G_r$. 
By \eqref{eq-QuasiCondition}, in order to have $v\le \overline{v}$ on $\partial G_r$, it 
suffices to require 
\begin{equation}\label{eq-Alpha-Lower}C_0+|c_2(0)|\le \mu_1r^\alpha.\end{equation}
Hence, we can take $\mu_1$ sufficiently large such that 
\eqref{eq-Alpha-Lower} hold. Therefore, we have 
$\mathcal Q(\overline{v})\le \mathcal Q(v)$ in $G_r$ and $v\le \overline{v}$ on $\partial G_r$. 
By the maximum principle, we get $v\le \overline{v}$ in $G_r$ and hence 
$$v\le c_2(0)t^2+\psi\quad\text{in }B_r.$$
Similarly, we have 
$$v\ge c_2(0)t^2-\psi\quad\text{in }B_r.$$ By taking 
$y'=0$, we have \eqref{eq-Alpha1} for $y'=0$. We can prove \eqref{eq-Alpha1} for any 
$(y', t)\in G_r$ by a similar method. Instead of \eqref{eq-Expression-c_2}, we have 
\begin{equation}\label{eq-Expression-c_2General}
c_2(y')=-\Big(\frac{N}{2A_{nn}+2P+Q}\Big)(y', 0).\end{equation}
We note that $c_2\in C^\alpha(B'_R)$. 

With \eqref{eq-Alpha1}, we will prove 
\be\label{eq-BoundTangentialGradient_vvAlpha0}
\frac{v}{t^2},\, \frac{Dv}{t},\, D^2v\in C^\alpha(\bar G_{r}).\ee
This is \eqref{eq-BoundTangentialGradient_vvAlpha} for $\tau=0$. 
To prove \eqref{eq-BoundTangentialGradient_vvAlpha0}, 
we take any $(y_0', t_0)\in G_{r}$ and set $\delta=t_0/2$. 
Instead of $v$, we consider the equation for $v(y',t)-c_2(y'_0)t^2$. 
The rest of the proof is  similar as that in the proof of 
Theorem \ref{thrm-TangentialEstimate}. We omit details and point out that 
\eqref{eq-Alpha1} allows us to scale the estimate of the H\"older semi-norms of the second derivatives. 

{\it Step 2.} We prove for general $\ell$ by an induction. We fix an integer $1\le k\le \ell$ and assume 
\eqref{eq-BoundTangentialGradient_vvAlpha} holds for $\tau=0, \cdots, k-1$. We now consider the case
$\tau=k$. 

We first claim, for some $c_{k,2}\in C^\alpha(B_R')$ and any $(y',t)\in G_{r}$, 
\begin{equation}\label{eq-Alpha6}|D_{y'}^kv(y',t)-c_{k,2}(y')t^2|\le Ct^{2+\alpha}.\end{equation}
The proof is similar as the proof of \eqref{eq-Alpha1}. By the induction hypothesis, the 
coefficients and the nonhomogeneous term in \eqref{TS91} satisfy all the regularity assumptions. 
We omit details. 

With \eqref{eq-Alpha6}, we can prove \eqref{eq-BoundTangentialGradient_vvAlpha}  
for $\tau=k$ by a similar scaling argument. 
\end{proof}

\section{Regularity along the Normal Direction}\label{sec-NormalRegularity}

In this section, we continue our study of the equation \eqref{eq-QuasiMain}
and discuss the regularity along the normal direction. 
In the previous section, we established the regularity of solutions 
along tangential directions. 
As noted by Lin \cite{Lin1989Invent}, the tangential regularity allows us 
to write the underlying partial differential equation as 
an ordinary differential equation in the $t$-direction. 
The utilization of the ODEs in this paper is modified from the work 
by Jian and Wang \cite{JianWang2013JDG, JianWang2014Adv}, 
where they iterated ODEs. 
The main focus there is  the regularity before singularity appears. 
As a result, their iteration of ODEs terminates before 
the logarithmic terms show up.  In our case, analyzing the impact of logarithmic terms 
on certain combinations of derivatives
constitutes an indispensable part of the 
study of the regularity of remainders.  
We will use a single ODE and iterate solutions. 
 
As in Section \ref{Sec-TangentialSmooth},
we denote by $y=(y', t)$ points in $\mathbb R^n$ and set, for any constant $r>0$, 
$$G_r=\{(y',t):\, |y'|<r,\, 0<t<r\}.$$
We start with the equation \eqref{eq-QuasiMain} and rewrite it in the form 
\begin{align}\label{gtt}
v_{tt}+p\frac{v_t}{t}+q \frac{v}{t^2}+F=0, 
\end{align}
where $p$, $q$, and $F$ are given by 
$$p=(A_{nn}^{-1}{P})\big|_{t=0}, \quad q=(A_{nn}^{-1}{Q})\big|_{t=0},$$
and 
$$F=\sum_{(i,j)\neq (n,n)}A_{nn}^{-1}A_{ij}v_{ij}+\frac1t(A_{nn}^{-1}{P}-p)v_t+\frac1{t^2}(A_{nn}^{-1}{Q}-q)v
+A_{nn}^{-1}N.$$
It is easy to see that $F$ can be viewed as a function in $y', t$ and 
\be\label{gtt-Coefficients}
\frac{v}{t}, v_t, \frac{D_{x'} v}{t}, D_{x'}v_t, 
D^2_{x'}v, \frac{v^2}{t^3},\frac{vv_t}{t^2}, \frac{v_t^2}{t},\ee
and $F$ is linear in the last three quantities in \eqref{gtt-Coefficients}. 
Throughout this section, we assume that  
{\it $p$ and $q$ are constants and $F$ is smooth in all of its arguments except $y'$.} 
In the following, we denote by $\prime$ the derivative with respect to $t$. This 
should not be confused with $y'$, the first $n-1$ coordinates of the point. 

We assume that $\underline{m}$ and $\overline{m}$ are two constants such that 
\be\label{eq-Assumption_m1}
p=1-(\underline{m}+\overline{m}), \quad q=\underline{m}\cdot\overline{m},\ee
and 
\be\label{eq-Assumption_m2}\underline{m}\le 0,\, \overline{m}\ge 3.\ee
Hence, $t^{\underline{m}}$ and $t^{\overline{m}}$ are 
solutions of the linear homogeneous equation corresponding to \eqref{gtt}. 
We also assume that $\overline{m}$ is an integer and demonstrate that 
the higher regularity fails due to the presence of $\log t$. 

We now treat $F$ in \eqref{gtt} as a function of $y'$ and $t$. 
A standard calculation yields the following result: {\it Let $v$ be a solution of 
\eqref{gtt} satisfying 
\be\label{eq:r7assumption} 
t^{-\underline{m}}v\to 0\quad\text{as }t\to 0.\ee
Then,}
\begin{align}\label{eq:r8}\begin{split}
v(y', t)&=\Big[v(y',r)r^{-\overline{m}}
-\frac{r^{\underline{m}-\overline{m}}}{\overline{m}-\underline{m}}
\int_0^r s^{1-\underline{m}}F(y',s)ds\Big] t^{\overline{m}}\\
&\qquad+\frac{t^{\underline{m}}}{\overline{m}-\underline{m}} 
\int_0^t s^{1-\underline{m}}F(y',s) ds
+\frac{t^{\overline{m}}}{\overline{m}-\underline{m}}
\int_t^r s^{1-\overline{m}}F(y',s) ds.\end{split}
\end{align}
We note that the regularity of $v$ in $y'$ inherits from that of $v(\cdot, r)$ and $F$, 
as long as the integrals in \eqref{eq:r8} make sense. We now rewrite 
\eqref{eq:r8} so we can discuss the regularity of $v$ in $t$. 

We now discuss the optimal regularity of solutions 
up to $C^{\overline{m}-1, \alpha}$. 
This method was adapted from \cite{JianWang2013JDG}. 
Throughout this section, $\tau$ and $\nu$ are nonnegative integers, 
used for the order of differentiation with respect to $y'$ and $t$, respectively. 

We start with the tangential regularity \eqref{eq-BoundTangentialGradient_vvAlpha} 
we proved in the previous section 
and view it as our basic assumption. 

\begin{theorem}\label{thrm-regularityC^{n-1}}
Assume that $\underline{m}$ is a constant and $\overline{m}$ is an integer satisfying 
\eqref{eq-Assumption_m1} and \eqref{eq-Assumption_m2} and 
that $F$ is $C^{\ell-2,\alpha}$ in $y'$ and smooth 
in other arguments, for some $\ell\ge \overline{m}-1$ and
$\alpha\in (0,1)$. 
Let $v\in C^{1,1}(\bar G_R)\cap C^{\ell, \alpha}(G_R)$ 
be a solution of \eqref{gtt} in $G_R$, 
for some $R>0$, and satisfy, 
for any $\tau\le \ell-2$ and  any  $r\in (0, R)$, 
\begin{align}\label{eq-Quasi-TangentialRegu-assumption}
t^{-2}{D^\tau_{y'}v},\, t^{-1}{DD^\tau_{y'}v},\, D^2D^\tau_{y'}v
\in C^{\alpha}(\bar{G}_r).
\end{align}
Then, for any $\nu\le \overline{m}-3$, any $\tau\le \ell-2-\nu$, and any  $r\in (0,R)$, 
\begin{align}\label{eq-Quasi-TangentialRegu-conclusion}
t^{-2}{D^\tau_{y'}\partial_t^\nu v},\, t^{-1}{DD^\tau_{y'}\partial_t^\nu v},\, D^2D^\tau_{y'}\partial_t^\nu v
\in C^{\alpha}(\bar{G}_r).
\end{align}
In particular, for any $\tau\le\ell-\overline{m}+1$, 
$$D_{y'}^\tau v\in C^{\overline{m}-1, \alpha}(\bar G_r).$$
\end{theorem}

\begin{proof} If $\overline{m}= 3$, then \eqref{eq-Quasi-TangentialRegu-conclusion} is the same as 
\eqref{eq-Quasi-TangentialRegu-assumption}. 
We consider $\overline{m}\ge 4$ and fix an $r\in (0,R)$. We introduce a new function $w$ by 
\begin{equation}\label{eq-relation-v-w}w=\frac{v}{t^2},\end{equation} 
and consider 
\begin{align}\label{eq-Quasi-TangentialRegu-conclusion-equiv}
D^\tau_{y'}\partial_t^\nu w,\, tDD^\tau_{y'}\partial_t^\nu w,\, 
t^2D^2D^\tau_{y'}\partial_t^\nu w
\in C^{\alpha}(\bar{G}_r).
\end{align}
By \eqref{eq-Quasi-TangentialRegu-assumption}, we have \eqref{eq-Quasi-TangentialRegu-conclusion-equiv} 
for $\nu=0$ and any $\tau\le \ell-2$. 
We will prove \eqref{eq-Quasi-TangentialRegu-conclusion-equiv}
for any $\nu\le \overline{m}-3$ and $\tau\le \ell-2-\nu$. 
Then, \eqref{eq-Quasi-TangentialRegu-conclusion} holds for any $\nu\le \overline{m}-3$ and $\tau\le \ell-2-\nu$.

We will prove \eqref{eq-Quasi-TangentialRegu-conclusion-equiv} by an induction on  $\nu$. 
Fix an integer $k$ with $1\le k\le \overline{m}-3$. 
We assume 
\eqref{eq-Quasi-TangentialRegu-conclusion-equiv}
for any nonnegative integers  $\nu\le k-1$ and $\tau\le \ell-2-\nu$, 
and proceed to prove
\eqref{eq-Quasi-TangentialRegu-conclusion-equiv}  for  $\nu\le k$
and $\tau\le \ell-2-\nu$. 

We claim, for any nonnegative integers $\nu\le k$ and $\tau\le \ell-2-\nu$,  
\begin{equation}\label{eq-regularity-F}\partial^\nu_tD_{x'}^\tau F\in C^\alpha(\bar G_{r}).\end{equation}
Recall that $F$ is a function of $y', t$, and quantities in \eqref{gtt-Coefficients}. 
In terms of $w$, $F$ is a function of $y',t$, and 
\begin{equation}\label{gtt-Coefficients-w}
t^2D_{y'}^2w, t^2D_{y'}w_t, t^2w_t, tD_{y'}w, tw, tw^2, t^2ww_t, t^3w_t^2.
\end{equation}
We need to calculate $\partial_t^\nu D_{y'}^\tau$ acting on these quantities, 
for $\nu\le k$ and $\tau\le \ell-2-\nu$. 
To this end, we divide quantities in \eqref{gtt-Coefficients-w} into two groups, 
the first group consisting of the first five quantities and the second group the last three. 
For the first group, we consider $t^2\partial_{t}D_{x'}w$ for an illustration. 
Note, for any nonnegative integers $\nu$ and $\tau$, 
\begin{align*}
\partial^\nu_tD_{y'}^\tau(t^2\partial_{t}D_{y'}w)=t^2\partial_t^2\partial_{t}^{\nu-1}D_{y'}^{\tau+1}w
+2\nu t\partial_t\partial_{t}^{\nu-1} D_{y'}^{\tau+1}w+\nu(\nu-1)\partial_{t}^{\nu-1} D_{y'}^{\tau+1}w.\end{align*}
We intentionally write derivatives with respect to $t$ as above. 
By the induction hypothesis, \eqref{eq-Quasi-TangentialRegu-conclusion-equiv} holds
for any nonnegative integers $\nu\le k-1$ and $\tau\le \ell-2-\nu$. Hence, 
for any nonnegative integers  $\nu\le k$ and $\tau\le \ell-2-\nu$, 
$$\partial^\nu_tD_{x'}^\tau(t^2\partial_{t}D_{x'}u)\in C^\alpha(\bar G_{r}).$$
A similar result holds for $t^2D_{y'}^2w, t^2w_t, tD_{y'}w$, and $tw$. 
For the last three quantities in \eqref{gtt-Coefficients-w}, we have 
\begin{align*} 
\partial_t(tw^2)&=2w\cdot tw_t+w^2,\\
\partial_t(t^2ww_t)&=t^2w_{tt}+(tw_t)^2+2w\cdot tw_t,\\
\partial_t(t^3w_t^2)&=2tw_t\cdot t^2w_{tt}+3(tw_t)^2.
\end{align*}
In other words, $\partial_t(tw^2), \partial_t(t^2ww_t)$, and 
$\partial_t(t^3w_t^2)$ can be expressed in terms of $w, tw_t$, and $t^2w_{tt}$. 
Hence, 
for any nonnegative integers  $\nu\le k$ and $\tau\le \ell-2-\nu$, 
$$\partial^\nu_tD_{y'}^\tau(tw^2), 
\partial^\nu_tD_{y'}^\tau(t^2ww_t), 
\partial^\nu_tD_{y'}^\tau(t^3w_t^2)\in C^\alpha(\bar G_{r}).$$
As a consequence, we obtain \eqref{eq-regularity-F} 
for any nonnegative integers $\nu\le k$ and $\tau\le \ell-2-\nu$.

In the following, we suppress the dependence of functions on $y'$ and 
write $v(t)$ instead of $v(y',t)$. 
We write \eqref{eq:r8} as 
\begin{align}\label{eq-expression-v}
v(t)=c_{\overline{m}} t^{\overline{m}}
+\underline{v}(t)
+\overline{v}(t),
\end{align}
where 
\begin{align}\label{eq-expression-c-m}c_{\overline{m}} =v(r)r^{-\overline{m}}
-\frac{r^{\underline{m}-\overline{m}}}{\overline{m}-\underline{m}}
\int_0^r s^{1-\underline{m}}F(s)ds,\end{align}
and 
\begin{align}\label{eq-expression-under-over-v}\begin{split} 
\underline{v}(t)&=\frac{t^{\underline{m}}}{\overline{m}-\underline{m}} 
\int_0^t s^{1-\underline{m}}F(s) ds,\\
\overline{v}(t)&=\frac{t^{\overline{m}}}{\overline{m}-\underline{m}}
\int_t^r s^{1-\overline{m}}F(s) ds.\end{split}\end{align}
By dividing \eqref{eq-expression-v} by $t^2$, we obtain 
\begin{align}\label{eq-expression-w-rewrite}
w(t)=c_{\overline{m}} t^{\overline{m}-2}
+\underline{w}(t)
+\overline{w}(t),
\end{align}
where
\begin{align}\label{eq-expression-under-over-w}\begin{split} 
\underline{w}(t)&=\frac{t^{\underline{m}-2}}{\overline{m}-\underline{m}} 
\int_0^t s^{1-\underline{m}}F(s) ds,\\
\overline{w}(t)&=\frac{t^{\overline{m}-2}}{\overline{m}-\underline{m}}
\int_t^r s^{1-\overline{m}}F(s) ds.\end{split}\end{align}
A straightforward computation yields 
\begin{align}\label{eq-expression-tw-t}
t\partial_tw(t)= (\overline{m}-2)c_{\overline{m}}t^{\overline{m}-2}
+(\underline{m}-2)\, \underline{w}(t)
+(\overline{m}-2)\,\overline{w}(t),
\end{align}
and 
\begin{align}\label{eq-expression-t2w-t2}\begin{split}
t^2\partial^2_{t}w(t)&= (\overline{m}-2)(\overline{m}-3)c_{\overline{m}}t^{\overline{m}}-F(t)\\
&\qquad +(\underline{m}-2)(\underline{m}-3) \underline{w}(t)
+(\overline{m}-2)(\overline{m}-3)\overline{w}(t).
\end{split}\end{align}
By  applying Lemma \ref{lemma-BasicHolderRegularity}  
to $\underline{w}$ and 
applying Lemma \ref{lemma-BasicHolderRegularity2} 
to $\overline{w}$,
we obtain, 
for any nonnegative integers $\nu\le k$ and $\tau\le \ell-2-\nu$, 
\begin{equation}\label{eq-regularity-w}
\partial_t^\nu D^\tau_{y'}\underline{w},\, \partial_t^\nu D^\tau_{y'}\overline{w}
\in C^{\alpha}(\bar G_r).\end{equation}
Note $c_{\overline{m}}\in C^{\ell-2,\alpha} (\bar B_r')$, by \eqref{eq-expression-c-m}. 
By the explicit expressions given by \eqref{eq-expression-w-rewrite}, \eqref{eq-expression-tw-t}, and 
\eqref{eq-expression-t2w-t2}, we have, 
for any nonnegative integers $\nu\le k$ and $\tau\le \ell-2-\nu$,
\begin{equation*}
\partial_t^\nu D^\tau_{y'}w,\, \partial_t^\nu D^\tau_{y'}(t\partial_tw),\, 
\partial_t^\nu D^\tau_{y'}(t^2\partial^2_tw)\in C^{\alpha}(\bar G_r).\end{equation*}
This implies \eqref{eq-Quasi-TangentialRegu-conclusion-equiv},  for any $\nu\le k$
and $\tau\le \ell-2-\nu$. 
\end{proof}

Next, we discuss the higher regularity of solutions. 
In the proof of Theorem \ref{thrm-regularityC^{n-1}}, 
the maximal $k$ 
is $\overline{m}-3$. In the following, we will calculate $F$ and $v$ inductively by 
increasing $k$ and obtain an expression of $v$ accordingly for each 
large $k$. We first consider the case $k=\overline{m}-2$. 

\begin{lemma}\label{lemma-Main-UpTo-n}
Assume that $\underline{m}$ is a constant and $\overline{m}$ is an integer satisfying 
\eqref{eq-Assumption_m1} and \eqref{eq-Assumption_m2} and 
that $F$ is $C^{\ell-2,\alpha}$ in $y'$ and smooth 
in other arguments,  for some $\ell\ge \overline{m}$ and
$\alpha\in (0,1)$. 
Let $v\in C^{1,1}(\bar G_R)\cap C^{\ell, \alpha}(G_R)$ 
be a solution of \eqref{gtt} in $G_R$, 
for some $R>0$, satisfying 
\eqref{eq-Quasi-TangentialRegu-assumption}  for any $r\in (0, R)$ and any 
$\tau\le\ell-2$. 
Then, 
\begin{align}\label{eq-expression_g_n1}
v=\sum_{i=2}^{\overline{m}-1}c_i t^i
+c_{\overline{m},1}t^{\overline{m}}\log t+c_{\overline{m},0}t^{\overline{m}}
+R_{\overline{m}}\quad\text{in }G_R,
\end{align}
where $c_{i}$, for $2\le i\le \overline{m}-1$, and $c_{\overline{m},j}$, for $0\le j\le 1$, are 
functions in $B_{R}'$ with 
$c_i\in C^{\ell-i,\alpha}(B'_R)$, for $2\le i\le \overline{m}-1$, 
$c_{\overline{m},1}\in C^{\ell-\overline{m},\alpha}(B'_R)$, and 
$c_{\overline{m},0}\in C^{\ell-\overline{m},\epsilon}(B'_R)$, for any $\epsilon\in (0,\alpha)$,
and $R_{\overline{m}}$ is a function in $G_R$ such that, 
for any $\nu\le \overline{m}$, $\tau\le \ell-\overline{m}$, any $r\in (0,R)$, and any $\epsilon\in (0,\alpha)$, 
\begin{align}\label{eq-regularity_h_n1}
D_{y'}^\tau\partial_t^\nu R_{\overline{m}}
\in C^{\overline{m}-\nu, \epsilon}(\bar{G}_r), 
\end{align}
and 
\be\label{eq-regularity_h_n2}
|D_{y'}^\tau\partial_t^\nu R_{\overline{m}}|
\le Ct^{\overline{m}-\nu+\alpha}\quad\text{in }G_r.\ee
\end{lemma}


\begin{proof} 
We adopt notations in the proof of Theorem \ref{thrm-regularityC^{n-1}} and fix an $r\in (0, R)$.  
Recall the function $w$ introduced in \eqref{eq-relation-v-w}. 
In the proof of Theorem \ref{thrm-regularityC^{n-1}}, we verified \eqref{eq-Quasi-TangentialRegu-conclusion-equiv}
for any $\nu\le \overline{m}-3$ and $\tau\le \ell-2-\nu$, i.e., 
\begin{align*}
D^\tau_{y'}\partial_t^\nu w,\, tDD^\tau_{y'}\partial_t^\nu w,\, 
t^2D^2D^\tau_{y'}\partial_t^\nu w
\in C^{\alpha}(\bar{G}_r).
\end{align*}
By  a similar argument as in the proof of 
Theorem \ref{thrm-regularityC^{n-1}}, we get, 
for any $\nu\le \overline{m}-2$ and $\tau\le \ell-2-\nu$, 
\begin{align}\label{eq-regularity-F1-int}D^\tau_{y'}\partial_t^\nu F\in C^{\alpha}(\bar G_r).\end{align}
Set 
$$a_i=\frac{1}{i!}\partial_t^iF(\cdot,0)\quad\text{for }i=0, \cdots, \overline{m}-2,$$
and write 
\begin{equation}\label{eq-identity-F-int-overline-m}F=\sum_{i=0}^{\overline{m}-2} a_it^i
+S_{\overline{m}-2}.\end{equation}
By \eqref{eq-regularity-F1-int}, we have 
\begin{equation}\label{eq-regularity-a-lower} 
a_i\in C^{\ell-2-i, \alpha}(B_1')\quad\text{for any }i=0, \cdots, \overline{m}-2,\end{equation}
and,  
for any $\nu\le \overline{m}-2$ and $\tau\le \ell-\overline{m}$, 
\begin{align}\label{eq-regularity-S-m}\begin{split}
\partial_t^\nu D_{y'}^\tau S_{\overline{m}-2}&\in C^\alpha(\bar G_r),\\
|\partial_t^\nu D_{y'}^\tau S_{\overline{m}-2}|&\le Ct^{\overline{m}-2-\nu+\alpha}
\quad\text{in } G_{r}. 
\end{split}\end{align}

By substituting \eqref{eq-identity-F-int-overline-m} in \eqref{eq:r8} 
and a straightforward computation, we have 
\begin{align}\label{eq-indentity-u-int-overline-m}
v=\sum_{i=2}^{\overline{m}-1}c_it^i+c_{\overline{m},1}t^{\overline{m}}\log t+c_{\overline{m},0}t^{\overline{m}}
+R_{\overline{m}},
\end{align}
where 
\begin{align}\label{eq-expression-coefficient-m1-int}\begin{split}
c_{i}&=-\frac{a_{i-2}}{(i-\underline{m})(i-\overline{m})}\quad\text{for }i=2, \cdots, \overline{m}-1,\\
c_{\overline{m},1}&=-\frac{a_{\overline{m}-2}}{\overline{m}-\underline{m}},\\
c_{\overline{m}, 0}&=v(\cdot,r)r^{-\overline{m}}+\sum_{i=0}^{\overline{m}-1}
\frac{a_{i-2}r^{i-\overline{m}}}{(i-\underline{m})(i-\overline{m})}
+\frac{a_{\overline{m}-2}\log r}{\overline{m}-\underline{m}}\\
&\qquad 
-\frac{r^{\underline{m}-\overline{m}}}{\overline{m}-\underline{m}}
\int_0^r s^{1-\underline{m}}S_{\overline{m}-2}ds
+\frac{1}{\overline{m}-\underline{m}}\int_0^rs^{1-\overline{m}}S_{\overline{m}-2}ds,
\end{split}\end{align}
and
\begin{align}\label{eq-expression-remainder-m-int}
R_{\overline{m}}=\frac{t^{\underline{m}}}{\overline{m}-\underline{m}}\int_0^ts^{1-\underline{m}}S_{\overline{m}-2}ds
-\frac{t^{\overline{m}}}{\overline{m}-\underline{m}}\int_0^ts^{1-\overline{m}}S_{\overline{m}-2}ds.
\end{align}
We note that $c_i$ is computed in terms of $a_{i-2}$ at the same boundary point, 
for $i=2,\cdots, \overline{m}-1$, and 
$c_{\overline{m},1}$ 
in terms of $a_{\overline{m}-2}$ at the same boundary point.
However, the expression for $c_{\overline{m},0}$ is more complicated
and involves the solution $v$ and its integrals. 

By \eqref{eq-regularity-a-lower}, we have 
$c_i\in C^{\ell-i,\alpha}(B'_1)$, for $2\le i\le \overline{m}-1$, and
$c_{\overline{m},1}\in C^{\ell-\overline{m}, \alpha}(B'_1)$.
In addition, by \eqref{eq-regularity-S-m} and Lemma \ref{lemma-BasicHolderRegularity1-int0}, we have 
$c_{\overline{m},0}\in C^{\ell-\overline{m},\epsilon}(B'_R)$ for any $\epsilon\in (0,\alpha)$. 
To analyze the regularity of $R_{\overline{m}}$, it is convenient to introduce $T_{\overline{m}-2}$ by 
\begin{equation}\label{eq-relation-R-T}
R_{\overline{m}}=t^2T_{\overline{m}-2}.
\end{equation} 
By \eqref{eq-expression-remainder-m-int}, we have 
$$T_{\overline{m}-2}=\underline{T}_{\overline{m}-2}+\overline{T}_{\overline{m}-2},$$
where 
\begin{align*}
\underline{T}_{\overline{m}-2}
&=\frac{t^{\underline{m}-2}}{\overline{m}-\underline{m}}\int_0^ts^{1-\underline{m}}S_{\overline{m}-2}ds,\\
\overline{T}_{\overline{m}-2}
&=-\frac{t^{\overline{m}-2}}{\overline{m}-\underline{m}}\int_0^ts^{1-\overline{m}}S_{\overline{m}-2}ds.
\end{align*}
A simple computation yields
\begin{align*}
t\partial_tT_{\overline{m}-2}= 
(\underline{m}-2) \underline{T}_{\overline{m}-2}
+(\overline{m}-2)\overline{T}_{\overline{m}-2},
\end{align*}
and 
\begin{align*}
t^2\partial^2_{t}T_{\overline{m}-2}= S_{\overline{m}-2}
+(\underline{m}-2)(\underline{m}-3) \underline{T}_{\overline{m}-2}
+(\overline{m}-2)(\overline{m}-3)\overline{T}_{\overline{m}-2}.
\end{align*}
By \eqref{eq-regularity-S-m},  Lemma \ref{lemma-BasicHolderRegularity}, 
and Lemma \ref{lemma-BasicHolderRegularity1}, we have, 
for any $\nu\le \overline{m}-2$ and $\tau\le \ell-\overline{m}$, 
\begin{align*}
\partial_t^\nu D^\tau_{y'}\underline{T}_{\overline{m}-2}\in C^{\alpha}(\bar G_r),
\quad 
\partial_t^\nu D^\tau_{y'}\overline{T}_{\overline{m}-2}\in C^{\epsilon}(\bar G_r),\end{align*}
and 
\begin{align*}
|\partial_t^\nu D^\tau_{y'}\underline{T}_{\overline{m}-2}|
+|\partial_t^\nu D^\tau_{y'}\overline{T}_{\overline{m}-2}|\le Ct^{\overline{m}-2-\nu+\alpha}\quad\text{in }G_{r}.\end{align*}
As a consequence, we obtain, 
for any $\nu\le \overline{m}-2$ and $\tau\le \ell-\overline{m}$, 
\begin{align}\label{eq-regularity-Rm-bar-equiv}
\partial_t^\nu D_{y'}^\tau T_{\overline{m}-2}, 
\partial_t^\nu D_{y'}^\tau(t\partial_tT_{\overline{m}-2}), 
\partial_t^\nu D_{y'}^\tau(t^2\partial^2_{t}T_{\overline{m}-2})\in C^{\epsilon}(\bar{G}_r), 
\end{align}
and
\begin{align}\label{eq-decay-Rm-bar-equiv}
&|\partial_t^\nu D_{y'}^\tau T_{\overline{m}-2}|
+|\partial_t^\nu D_{y'}^\tau(t\partial_tT_{\overline{m}-2})|
+|\partial_t^\nu D_{y'}^\tau(t^2\partial^2_{t}T_{\overline{m}-2})|
\le Ct^{\overline{m}-2-\nu+\alpha}
\quad\text{in }G_{r}.
\end{align}
Back to $R_{\overline{m}}$, \eqref{eq-relation-R-T}, \eqref{eq-regularity-Rm-bar-equiv}, and \eqref{eq-decay-Rm-bar-equiv} 
imply, for any $\nu\le \overline{m}-2$ and $\tau\le \ell-\overline{m}$, 
\begin{align*}
\partial_t^\nu D_{x'}^\tau(t^{-2} R_{\overline{m}}), 
\partial_t^\nu D_{x'}^\tau\big(t^{-1}\partial_tR_{\overline{m}}), 
\partial_t^\nu D_{x'}^\tau(\partial^2_{t}R_{\overline{m}})\in C^{\epsilon}(\bar{G}_r), 
\end{align*}
and
\begin{align*}
|\partial_t^\nu D_{x'}^\tau(t^{-2} R_{\overline{m}})|+
|\partial_t^\nu D_{x'}^\tau\big(t^{-1}\partial_tR_{\overline{m}})|+
|\partial_t^\nu D_{x'}^\tau(\partial^2_{t}R_{\overline{m}})|\le Ct^{\overline{m}-2-\nu+\alpha}
\quad\text{in }G_{r}.\end{align*}
Therefore, we conclude \eqref{eq-regularity_h_n1}-\eqref{eq-regularity_h_n2}. 
\end{proof}

In Lemma \ref{lemma-Main-UpTo-n}, the coefficients $c_2, \cdots, c_{\overline{m}-1}$, and 
$c_{\overline{m}, 1}$ have explicit expressions in terms of $p, q$, and $F$ and hence their regularity 
can be determined by that of $F$. However, no such expression exists for $c_{\overline{m}, 0}$
and there is a slight loss of regularity for $c_{\overline{m},0}$. 
Under the assumption $c_{\overline{m}, 1}=0$, Lin \cite{Lin1989Invent} and Tonegawa
\cite{Tonegawa1996MathZ} claimed $u\in C^{\overline{m}, \alpha}(\bar B_r)$, 
for the solution $u$ of the equation \eqref{eq-Intro-Equ} with $\overline{m}=n+1$. Their proof actually yields 
$u\in C^{\overline{m}, \epsilon}(\bar B_r)$, for any $\epsilon\in (0,\alpha)$. 

We now expand $v$ to arbitrary orders and estimate remainders. 

\begin{theorem}\label{Thm-MainThm}
Assume that $\underline{m}$ is a constant and $\overline{m}$ is an integer satisfying 
\eqref{eq-Assumption_m1} and \eqref{eq-Assumption_m2} and 
that $F$ is $C^{\ell-2,\alpha}$ in $y'$ and smooth in other 
arguments,  for some integers $\ell\ge k\ge\overline{m}$ and
some $\alpha\in (0,1)$. 
Let $v\in C^{1,1}(\bar G_R)\cap C^{\ell, \alpha}(G_R)$ 
be a solution of \eqref{gtt} in $G_R$, 
for some $R>0$, satisfying 
\eqref{eq-Quasi-TangentialRegu-assumption}  for any $r\in (0, R)$ and any 
$\tau\le \ell-2$. 
Then, 
\begin{align}\label{eq-expression_g_n2z}
v=\sum_{i=2}^{\overline{m}-1}c_i t^i
+\sum_{i=\overline{m}}^k \sum_{j=0}^{N_i} c_{i, j}t^{i}(\log t)^j
+R_{k}\quad\text{in }G_R,\end{align}
where $c_{i}$ and $c_{i,j}$ are 
functions in $B_{R}'$ with 
$c_i\in C^{\ell-i,\alpha}(B'_R)$, for $i=2, \cdots, \overline{m}-1$, 
$c_{\overline{m},1}\in C^{\ell-\overline{m},\alpha}(B'_R)$, and 
$c_{i,j}\in C^{\ell-i,\epsilon}(B'_R)$, for $(i,j)=(\overline{m}, 0)$ or 
$i>\overline{m}$, and for any $\epsilon\in (0,\alpha)$,
$N_i$ is a nonnegative integer for $i\ge \overline{m}$, with $N_{\overline{m}}=1$, 
and $R_k$ is a function in $G_R$ such that, for 
any $\nu\le k$,  $\tau\le \ell-k$, any $r\in (0, R)$, and any $\epsilon\in (0,\alpha)$, 
\begin{align}\label{eq-regularity_R_k1z}
D_{y'}^\tau\partial_t^\nu R_k 
\in C^{k-\nu, \epsilon}(\bar{G}_r), 
\end{align}
and 
\be\label{eq-regularity_R_k2z}
|D_{y'}^\tau\partial_t^\nu R_k|
\le Ct^{k-\nu+\alpha}\quad\text{in }G_r.\ee
\end{theorem}

\begin{proof} 
We adopt notations in the proofs of Theorem \ref{thrm-regularityC^{n-1}}
and Lemma \ref{lemma-Main-UpTo-n} and fix an $r\in (0, R)$.  
Recall the function $w$ introduced in \eqref{eq-relation-v-w}. 
In the following, we will expand $v, w$, and $F$ according to  $t^i$ and $t^i(\log t)^j$. 
We always denote by $c_i, c_{i,j}$, and $R_k$ the coefficients and the remainder for $v$, 
by $b_i, b_{i,j}$, and $T_k$ for $w$, and 
by $a_i, a_{i,j}$, and $S_k$ for $F$. We divide the proof into several steps.

{\it Step 1. The setup.} 
For any $k$ with $\ell\ge k\ge\overline{m}$, we write \eqref{eq-expression_g_n2z} in terms of $w$ as 
\begin{align}\label{eq-expression-w-general-k}
w=\sum_{i=0}^{\overline{m}-3}b_i t^i
+\sum_{i=\overline{m}-2}^{k-2} \sum_{j=0}^{M_i} b_{i, j}t^{i}(\log t)^j
+T_{k-2}\quad\text{in }G_R,
\end{align}
where $b_{i}$ and $b_{i,j}$ are 
functions in $B_{R}'$ 
and $T_{k-2}$ is a function in $G_R$ given by 
\begin{align}\label{eq-relation-c-a-k}\begin{split}
b_i&=c_{i+2}\quad\text{for }0\le i\le \overline{m}-3,\\
b_{i,j}&=c_{i+2, j}\quad\text{for }\overline{m}-2\le i\le k-2\text{ and }0\le j\le M_i\equiv N_{i+2},
\end{split}\end{align}
and 
\begin{equation}\label{eq-relation-R-T-k}
T_{k-2}=t^{-2}R_k.
\end{equation}
We will prove \eqref{eq-expression-w-general-k} and, for any $\epsilon\in (0,\alpha)$,  
\begin{align}\label{eq-regularity-b-k}\begin{split}
&b_i\in C^{\ell-2-i,\alpha}(B'_R)\text{ for }0\le i\le \overline{m}-3, \quad
b_{\overline{m}-2,1}\in C^{\ell-\overline{m},\alpha}(B'_R),\\
&b_{i,j}\in C^{\ell-2-i,\epsilon}(B'_R)\text{ for }(i,j)=(\overline{m}-2, 0)\text{ or }
\overline{m}-1\le i\le k-2, 0\le j\le M_i,
\end{split}\end{align} 
and, for any $\nu\le k-2$ and $\tau\le \ell-k$, 
\begin{align}\label{eq-regularity-Rk-equiv}\begin{split}
\partial_t^\nu D_{y'}^\tau T_{k-2}, 
\partial_t^\nu D_{y'}^\tau(t\partial_tT_{k-2}), 
\partial_t^\nu D_{y'}^\tau(t^2\partial^2_{t}T_{k-2})&\in C^{\epsilon}(\bar{G}_r), \\
|\partial_t^\nu D_{y'}^\tau T_{k-2}|
+|\partial_t^\nu D_{y'}^\tau(t\partial_tT_{k-2})|
+|\partial_t^\nu D_{y'}^\tau(t^2\partial^2_{t}T_{k-2})| &\le Ct^{k-2-\nu+\alpha}
\quad\text{in }G_{r}.
\end{split}\end{align}
Back to $R_k$,  \eqref{eq-relation-R-T-k} and \eqref{eq-regularity-Rk-equiv} 
imply, 
for any $\nu\le k-2$ and $\tau\le \ell-k$, 
\begin{align*}
\partial_t^\nu D_{y'}^\tau(t^{-2} R_{k}), 
\partial_t^\nu D_{y'}^\tau\big(t^{-1}\partial_tR_{k}), 
\partial_t^\nu D_{y'}^\tau(\partial^2_{t}R_{k})\in C^{\epsilon}(\bar{G}_r), 
\end{align*}
and
\begin{align*}
|\partial_t^\nu D_{y'}^\tau(t^{-2} R_{k})|+
|\partial_t^\nu D_{y'}^\tau\big(t^{-1}\partial_tR_{k})|+
|\partial_t^\nu D_{y'}^\tau(\partial^2_{t}R_{k})|\le Ct^{k-2-\nu+\alpha}
\quad\text{in }G_{r}.\end{align*}
Therefore, we conclude \eqref{eq-expression_g_n2z} and \eqref{eq-regularity_R_k1z}-\eqref{eq-regularity_R_k2z}
for any $\nu\le k$ and  $\tau\le \ell-k$. 

We prove \eqref{eq-expression-w-general-k} and 
\eqref{eq-regularity-b-k}-\eqref{eq-regularity-Rk-equiv} 
by an induction on $k$. 
First, \eqref{eq-expression-w-general-k} and 
\eqref{eq-regularity-b-k}-\eqref{eq-regularity-Rk-equiv} 
hold  for $k=\overline{m}$ by Lemma \ref{lemma-Main-UpTo-n}. 
In particular, \eqref{eq-regularity-Rk-equiv} 
with $k=\overline{m}$ is simply 
\eqref{eq-regularity-Rm-bar-equiv}-\eqref{eq-decay-Rm-bar-equiv}. 
We fix an integer $k$ with $\overline{m}<k\le \ell$. We assume that 
\eqref{eq-expression-w-general-k} and 
\eqref{eq-regularity-b-k}-\eqref{eq-regularity-Rk-equiv} 
hold with $k$ replaced by 
any integer between $\overline{m}$ and $k-1$ 
and then proceed to prove \eqref{eq-expression-w-general-k} and 
\eqref{eq-regularity-b-k}-\eqref{eq-regularity-Rk-equiv} 
for $k$. 
With $b_{i}$, for $0\le i\le \overline{m}-3$, and $b_{i,j}$, for $\overline{m}-2\le i\le k-3$ and $0\le j\le M_i$, already determined, 
we will find $b_{k-2,j}$, for $0\le j\le M_{k-2}$, and $T_{k-2}$, and prove that they have the stated properties. 
In the following, we will use \eqref{eq-regularity-Rk-equiv} 
with 
$k$ replaced by $k-1$ and $k-2$. 

{\it Step 2. The expansion of $F$.}
We now analyze $F$ and claim 
\begin{equation}\label{eq-identity-F-int-k}F=\sum_{i=0}^{\overline{m}-2} a_it^i
+\sum_{i=\overline{m}-1}^{k-2}\sum_{j=0}^{M_i} a_{i,j}t^i(\log t)^j+S_{k-2}\quad\text{in }G_R,\end{equation}
where $a_i$ and $a_{i,j}$ are functions on $B_R'$ 
and $S_{k-2}$ a function in $G_R$ 
such that, for any $\epsilon\in (0,\alpha)$,  
\begin{align}\label{eq-regularity-coefficients-F-k}\begin{split}
&a_i\in C^{\ell-2-i,\alpha}(B'_R)\quad\text{for }0\le i\le\overline{m}-2,\\  
&a_{i, j}\in C^{\ell-2-i,\epsilon}(B'_R)\quad\text{for } 
\overline{m}-1\le i\le k-2\text{ and }0\le j\le M_i,
\end{split}\end{align}
and, for any nonnegative integers $\nu\le k-2$ 
and $\tau\le \ell-k$,  
\begin{align}\label{eq-LinearRegularity-F-m1}\begin{split}
\partial_t^\nu D_{y'}^\tau S_{k-2}&\in C^{\epsilon}(\bar{G}_r),\\
|\partial_t^\nu D_{y'}^\tau S_{k-2}|&\le  Ct^{k-2-\nu+\alpha}
\quad\text{in }G_{r}.\end{split}\end{align}
We point out that the second summation in \eqref{eq-identity-F-int-k} for $t^i(\log t)^j$ starts from $i=\overline{m}-1$. 
The regularity of $a_i$ for $0\le i\le  \overline{m}-2$ is already established in \eqref{eq-regularity-a-lower}. 
The regularity of $a_{i,j}$ for $i=\overline{m}-1, \cdots, k-3$ is part of the induction hypothesis. 
We will focus on $a_{k-2,j}$, for $0\le j\le M_{k-2}$, and $S_{k-2}$. 

Recall that $F$ is a function of $y', t$, and the quantities given by \eqref{gtt-Coefficients-w}, i.e., 
\begin{equation*} 
t^2D_{y'}^2w, t^2D_{y'}w_t, t^2w_t, tD_{y'}w, tw, tw^2, t^2ww_t, t^3w_t^2.
\end{equation*}
By the induction hypothesis, \eqref{eq-expression-w-general-k} and 
\eqref{eq-regularity-Rk-equiv} 
hold for $k-2$ and $k-1$ replacing $k$. 
We will use \eqref{eq-expression-w-general-k} and 
\eqref{eq-regularity-Rk-equiv} 
with $k-2$ to analyze $t^2D_{y'}^2w$ 
and use \eqref{eq-expression-w-general-k} and \eqref{eq-regularity-Rk-equiv} 
with $k-1$ to analyze the rest of the 
quantities $t^2D_{y'}w_t, t^2w_t, tD_{y'}w, tw, tw^2, t^2ww_t, t^3w_t^2$.

By \eqref{eq-expression-w-general-k} with $k$ replaced by $k-2$, we have 
\begin{align*}
w=\sum_{i=0}^{\overline{m}-3}b_i t^i
+\sum_{i=\overline{m}-2}^{k-4} \sum_{j=0}^{M_i} b_{i, j}t^{i}(\log t)^j
+T_{k-4}\quad\text{in }G_R,
\end{align*}
where $b_i$ and $b_{i,j}$ satisfy \eqref{eq-regularity-b-k} and $T_{k-4}$ 
satisfies, for any $\nu\le k-4$ and $\tau\le \ell-k$, 
\begin{align*}
\partial_t^\nu D_{y'}^\tau T_{k-4}, 
\partial_t^\nu D_{y'}^\tau(t\partial_tT_{k-4}), 
\partial_t^\nu D_{y'}^\tau(t^2\partial^2_{t}T_{k-4})&\in C^{\epsilon}(\bar{G}_r), \\
|\partial_t^\nu D_{y'}^\tau T_{k-4}|
+|\partial_t^\nu D_{y'}^\tau(t\partial_tT_{k-4})|
+|\partial_t^\nu D_{y'}^\tau(t^2\partial^2_{t}T_{k-4})| &\le Ct^{k-4-\nu+\alpha}
\quad\text{in }G_{r}.
\end{align*}
It is easy to see that 
\begin{align*}
t^2D_{y'}^2w=\sum_{i=2}^{\overline{m}-1}D_{y'}^2b_{i-2} t^i
+\sum_{i=\overline{m}}^{k-2} \sum_{j=0}^{M_i} D_{y'}^2b_{i-2, j}t^{i}(\log t)^j
+t^2D_{y'}^2T_{k-4}\quad\text{in }G_R.
\end{align*}
The coefficients of $t^i$ and $t^i(\log t)^j$ and the remainder $t^2D_{y'}^2T_{k-4}$ satisfy 
\begin{align*} 
&D_{y'}^2b_{i-2}\in C^{\ell-2-i,\alpha}(B'_R)\quad\text{for }0\le i\le\overline{m}-1,\\  
&D_{y'}^2b_{i-2, j}\in C^{\ell-2-i,\epsilon}(B'_R)\quad\text{for } 
\overline{m}\le i\le k-2\text{ and }0\le j\le M_i,
\end{align*}
and, for any nonnegative integers $\nu\le k-2$ 
and $\tau\le \ell-k$,  
\begin{align*} 
\partial_t^\nu D_{y'}^\tau (t^2D_{y'}^2T_{k-4})&\in C^{\epsilon}(\bar{G}_r),\\
|\partial_t^\nu D_{y'}^\tau (t^2D_{y'}^2T_{k-4})|&\le  Ct^{k-2-\nu+\alpha}
\quad\text{in }G_{r}. 
\end{align*}
In other words, the coefficients and the remainder in $t^2D_{y'}^2w$ 
satisfy \eqref{eq-regularity-coefficients-F-k} and \eqref{eq-LinearRegularity-F-m1}. 
Had we used \eqref{eq-expression-w-general-k} and 
\eqref{eq-regularity-Rk-equiv} 
for $k-1$ to study $t^2D_{y'}^2w$, 
there would be a loss of regularity in $y'$ from the remainder $t^2D_{y'}^2T_{k-3}$
and such a loss would accumulate upon iterations.

Next, by \eqref{eq-expression-w-general-k} with $k-1$ replaced by $k-2$, we have 
\begin{align*}
w=\sum_{i=0}^{\overline{m}-3}b_i t^i
+\sum_{i=\overline{m}-2}^{k-3} \sum_{j=0}^{M_i} b_{i, j}t^{i}(\log t)^j
+T_{k-3}\quad\text{in }G_R,
\end{align*}
where $b_i$ and $b_{i,j}$ satisfy \eqref{eq-regularity-b-k} and $T_{k-3}$ 
satisfies, for any $\nu\le k-3$ and $\tau\le \ell-k$, 
\begin{align*}
\partial_t^\nu D_{y'}^\tau T_{k-3}, 
\partial_t^\nu D_{y'}^\tau(t\partial_tT_{k-3}), 
\partial_t^\nu D_{y'}^\tau(t^2\partial^2_{t}T_{k-3})&\in C^{\epsilon}(\bar{G}_r), \\
|\partial_t^\nu D_{y'}^\tau T_{k-3}|
+|\partial_t^\nu D_{y'}^\tau(t\partial_tT_{k-3})|
+|\partial_t^\nu D_{y'}^\tau(t^2\partial^2_{t}T_{k-3})| &\le Ct^{k-3-\nu+\alpha}
\quad\text{in }G_{r}.
\end{align*}
By a similar argument, we can prove that 
the coefficients and the remainders in $t^2D_{y'}w_t$, $t^2w_t, tD_{y'}w, tw$ 
satisfy \eqref{eq-regularity-coefficients-F-k} and \eqref{eq-LinearRegularity-F-m1}. 
We now analyze $tw^2, t^2ww_t, t^3w_t^2$. For an illustration, we consider $tw^2$. 
For brevity of notations, we write 
\begin{align*}
w=\sum_{i=0}^{k-3} \sum_{j=0}^{M_i} b_{i, j}t^{i}(\log t)^j
+T_{k-3}\quad\text{in }G_R.
\end{align*}
Here and hereafter, we set $a_{i,0}=a_i$, $b_{i,0}=b_i$ and $M_i=0$, for $0\le i\le \overline{m}-3$. 
A simple computation yields 
\begin{align*}
tw^2&=
\sum_{i_1, i_2=0}^{k-3} \sum_{j_1=0}^{M_{i_1}}\sum_{j_2=0}^{M_{i_2}} b_{i_1, j_1}b_{i_2, j_2}t^{i_1+i_2+1}(\log t)^{j_1+j_2}
\\
&\qquad +2tT_{k-3}\sum_{i=0}^{k-3} \sum_{j=0}^{M_i} b_{i, j}t^{i}(\log t)^j+tT^2_{k-3}\quad\text{in }G_R.
\end{align*}
For any $i, j$ with $1\le i\le 2k-5$ and $0\le j\le 2M_i$, set 
$$\widehat{a}_{i,j}=\sum_{i_1+i_2+1=i} \sum_{j_1+j_2=j} b_{i_1, j_1}b_{i_2, j_2}.$$
Then, we can write 
\begin{align*}
tw^2=
\sum_{i=1}^{k-2}\sum_{j=0}^{M_i} \widehat{a}_{i,j}t^i(\log t)^j+\widehat{S}_{k-2},
\end{align*} 
where 
\begin{align*} 
\widehat{S}_{k-2}=\sum_{i=k-1}^{2k-5}\sum_{j=0}^{M_i} \widehat{a}_{i,j}t^i(\log t)^j
+2tT_{k-3}\sum_{i=0}^{k-3} \sum_{j=0}^{M_i} b_{i, j}t^{i}(\log t)^j+tT^2_{k-3}\quad\text{in }G_R.
\end{align*}
It is straightforward to verify, 
for any $\epsilon\in (0,\alpha)$,  
\begin{align*}
&\widehat{a}_i\in C^{\ell-2-i,\alpha}(B'_R)\quad\text{for }1\le i\le\overline{m}-2,\\  
&\widehat{a}_{i, j}\in C^{\ell-2-i,\epsilon}(B'_R)\quad\text{for } 
\overline{m}-1\le i\le k-2\text{ and }0\le j\le M_i,
\end{align*}
and, for any nonnegative integers $\nu\le k-2$ 
and $\tau\le \ell-k$,  
\begin{align*}
\partial_t^\nu D_{y'}^\tau \widehat{S}_{k-2}&\in C^{\epsilon}(\bar{G}_r),\\
|\partial_t^\nu D_{y'}^\tau \widehat{S}_{k-2}|&\le  Ct^{k-2-\nu+\alpha}
\quad\text{in }G_{r}.\end{align*}
In other words, the coefficients and the remainder in $tw^2$ 
satisfy \eqref{eq-regularity-coefficients-F-k} and \eqref{eq-LinearRegularity-F-m1}.
It is similar to discuss $t^2ww_t$ and $t^3w_t^2$. 

In summary, if we expand each of 
$t^2D_{y'}^2w, t^2D_{y'}w_t, t^2w_t, tD_{y'}w, tw, tw^2, t^2ww_t, t^3w_t^2$ up to $t^{k-2}$, 
then the coefficients and the remainders satisfy \eqref{eq-regularity-coefficients-F-k} and \eqref{eq-LinearRegularity-F-m1}.
Recall that $F$ is smooth in these quantities. Hence, we can write $F$ in \eqref{eq-identity-F-int-k}, 
and the coefficients and the remainder satisfy \eqref{eq-regularity-coefficients-F-k} and \eqref{eq-LinearRegularity-F-m1}.

{\it Step 3. The expansion of $w$.} 
Next, by comparing \eqref{eq-identity-F-int-k} with \eqref{eq-identity-F-int-overline-m}, we have 
$$S_{\overline{m}-2}=\sum_{i=\overline{m}-1}^{k-2}\sum_{j=0}^{M_i}a_{i,j}t^i(\log t)^j+S_{k-2}.$$
Substituting such an expression in \eqref{eq-indentity-u-int-overline-m} and \eqref{eq-expression-remainder-m-int}, we have 
\begin{align}\label{eq-expression-v-k-intermediate}\begin{split}
v&=\sum_{i=2}^{\overline{m}-1}c_it^i+c_{\overline{m},1}t^{\overline{m}}\log t+ c_{\overline{m},0}t^{\overline{m}}\\
&\qquad +\frac{1}{\overline{m}-\underline{m}}\sum_{i=\overline{m}-1}^{k-2}\sum_{j=0}^{M_i}
a_{i,j}\int_0^t [t^{\underline{m}}s^{i+1-\underline{m}}-t^{\overline{m}}s^{i+1-\overline{m}}](\log s)^jds+R_k,
\end{split}\end{align}
where $c_i$, for $2\le i\le \overline{m}-1$, $c_{\overline{m}, 1}$, and $c_{\overline{m}, 0}$ are given by 
\eqref{eq-expression-coefficient-m1-int}, and $R_k$ is given by 
\begin{align*} 
R_{k}=\frac{t^{\underline{m}}}{\overline{m}-\underline{m}}\int_0^ts^{1-\underline{m}}S_{k-2}ds
-\frac{t^{\overline{m}}}{\overline{m}-\underline{m}}\int_0^ts^{1-\overline{m}}S_{k-2}ds.
\end{align*}
For each fixed $i=\overline{m}-1, \cdots, k-2$, the middle summation for $j$ in \eqref{eq-expression-v-k-intermediate}
yields a linear combination of $t^{i+2}(\log t)^j$, for $0\le j\le M_i$. 
By dividing \eqref{eq-expression-v-k-intermediate} by $t^2$, we obtain 
\begin{align}\label{eq-expression-w-k-2-intermediate}
w=\sum_{i=0}^{\overline{m}-3}b_it^i+\sum_{i=\overline{m}-2}^{k-2}\sum_{i=0}^{M_i}b_{i,j}t^{i}(\log t)^j+T_{k-2},
\end{align}
where, for each $i=\overline{m}-1, \cdots, k-2$ and $j=0, \cdots, M_i$, 
$$\text{$b_{i,j}$ is a linear combination of $a_{i,0}, \cdots, a_{i,j}$},$$
and $T_{k-2}$ is given by 
\begin{align}\label{eq-expression-remainder-T-k-2}
T_{k-2}=\frac{t^{\underline{m}-2}}{\overline{m}-\underline{m}}\int_0^ts^{1-\underline{m}}S_{k-2}ds
-\frac{t^{\overline{m}-2}}{\overline{m}-\underline{m}}\int_0^ts^{1-\overline{m}}S_{k-2}ds.
\end{align}
For $i=k-2$ and $j=0, \cdots, M_i$, each $a_{i,j}$ satisfies \eqref{eq-regularity-coefficients-F-k}. 
Hence, each $b_{i,j}$ satisfies \eqref{eq-regularity-b-k}. 
Next, set 
\begin{align*}
\underline{T}_{k-2}
&=\frac{t^{\underline{m}-2}}{\overline{m}-\underline{m}}\int_0^ts^{1-\underline{m}}S_{k-2}ds,\\
\overline{T}_{k-2}&=-\frac{t^{\overline{m}-2}}{\overline{m}-\underline{m}}\int_0^ts^{1-\overline{m}}S_{k-2}ds.
\end{align*}
Then, 
$$T_{k-2}=\underline{T}_{k-2}+\overline{T}_{k-2}.$$
A simple computation yields
\begin{align*}
t\partial_tT_{k-2}= 
(\underline{m}-2) \underline{T}_{k-2}
+(\overline{m}-2)\overline{T}_{k-2},
\end{align*}
and 
\begin{align*}
t^2\partial^2_{t}T_{k-2}= -S_{k-2}
+(\underline{m}-2)(\underline{m}-3) \underline{T}_{k-2}
+(\overline{m}-2)(\overline{m}-3)\overline{T}_{k-2}.
\end{align*}
By \eqref{eq-LinearRegularity-F-m1}, Lemma \ref{lemma-BasicHolderRegularity}, 
and Lemma \ref{lemma-BasicHolderRegularity1}, we have, 
for any $\nu\le k-2$ and $\tau\le \ell-k$, 
\begin{align*}
\partial_t^\nu D^\tau_{y'}\underline{T}_{k-2}\in C^{\epsilon}(\bar G_r),
\quad 
\partial_t^\nu D^\tau_{y'}\overline{T}_{k-2}\in C^{\epsilon}(\bar G_r),\end{align*}
and 
\begin{align*}
|\partial_t^\nu D^\tau_{y'}\underline{T}_{k-2}|
+|\partial_t^\nu D^\tau_{y'}\overline{T}_{k-2}|\le Ct^{k-2-\nu+\alpha}\quad\text{in }G_{r}.\end{align*}
As a consequence, we obtain, 
for any $\nu\le k-2$ and $\tau\le \ell-k$, 
\begin{align*}
\partial_t^\nu D_{y'}^\tau T_{k-2}, 
t\partial_t\partial_t^\nu D_{y'}^\tau T_{k-2}, 
t^2\partial_t^2\partial_t^\nu D_{y'}^\tau T_{k-2}\in C^{\epsilon}(\bar{G}_r), 
\end{align*}
and
\begin{align*}
|\partial_t^\nu D_{y'}^\tau T_{k-2}|
+|t\partial_t\partial_t^\nu D_{y'}^\tau T_{k-2}|
+|t^2\partial_t^2\partial_t^\nu D_{y'}^\tau T_{k-2}|
\le Ct^{k-2-\nu+\alpha}
\quad\text{in }G_{r}.\end{align*}
Therefore, we conclude \eqref{eq-regularity-Rk-equiv} 
for the chosen $k$. This finishes the proof by induction. 
\end{proof}

Next, we prove two corollaries.  We first prove that, if the first logarithmic term does not appear, 
then there are no logarithmic terms in the expansion and the solutions are as regular as the 
nonhomogeneous terms allow. 

\begin{cor}\label{Cor-ExpSmth}
Assume that $\underline{m}$ is a constant and $\overline{m}$ is an integer satisfying 
\eqref{eq-Assumption_m1} and \eqref{eq-Assumption_m2} and 
that $F$ is $C^{\ell-2,\alpha}$ in $y'$ and smooth in other 
arguments,  for some integers $\ell\ge\overline{m}$ and
some $\alpha\in (0,1)$. 
Let $v\in C^{1,1}(\bar G_R)\cap C^{\ell, \alpha}(G_R)$ 
be a solution of \eqref{gtt} in $G_R$, 
for some $R>0$, satisfying 
\eqref{eq-Quasi-TangentialRegu-assumption}  for any $r\in (0, R)$ and any 
$\tau\le \ell-2$. 
If $c_{\overline{m}, 1}=0$, then $c_{i, j}=0$, for any $i=\overline{m}, \cdots, \ell$ and $j=1, \cdots, N_i$, 
and $v\in C^{\ell,\epsilon}(\bar G_r)$, for any $r\in (0, R)$ and $\epsilon\in (0,\alpha)$. 
\end{cor}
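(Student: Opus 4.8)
The plan is to look inside the inductive construction in the proof of Theorem~\ref{Thm-MainThm} and to track where logarithms are created; the point is that the only ``seed'' is the coefficient $c_{\overline m,1}$, so once it vanishes the whole expansion is logarithm--free, and the asserted regularity then follows from this expansion together with the tangential regularity \eqref{eq-Quasi-TangentialRegu}. Concretely, I would prove by induction on $k=\overline m,\overline m+1,\dots,\ell$ that in the expansion \eqref{eq-expression_g_n2z} of $v$ at level $k$ one has $c_{i,j}=0$ for all $\overline m\le i\le k$ and all $j\ge 1$, so that $v=\sum_{i=2}^{k}c_{i,0}(y')t^i+R_k$. The base case $k=\overline m$ is Lemma~\ref{lemma-Main-UpTo-n} together with the hypothesis $c_{\overline m,1}=0$.

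For the inductive step, suppose the claim holds below level $k$. Then the polynomials $P_{k-2}$ and $P_{k-1}$ of \eqref{VAssum}, which enter the computation of $F_{k-2}=\partial_t^{k-2}F_0$ in Step~1 of the proof of Theorem~\ref{Thm-MainThm}, carry no logarithmic term, and hence in \eqref{eq-ExpressionF} one has $b_{i,j}=0$ for every $j\ge 1$: a factor $(\log t)^j$ with $j\ge1$ can only be produced when $\partial_t^{k-2}$ falls on a $\log t$--factor, whereas a pure power $t^i$ ($i\ge0$) differentiates to zero or to another nonnegative power. Thus $F_{k-2}=b_{0,0}(y')+G$ with $G$ of the type in \eqref{eq-ExpressionF}. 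Solving the $v_{k-2}$--equation as in Step~2, the forcing $b_{0,0}t^0$ is non-resonant (its natural exponent $2$ is a root of the indicial equation of that Euler-type ODE only when $k=\overline m$) and $G$, having only non-integer exponents, produces no logarithm either; hence $v_{k-2}$ has no logarithmic term. Iterating upward as in Step~3 (via \eqref{eq-Iteration_g}), a logarithm is created only when a pure power reaches $t^1$; the unique such power present in $v_{k-2}$ is $t^{\overline m-k+2}$, and following it up one finds that it yields precisely a $t^{\overline m}(\log t)$--term of $v$, whose coefficient is $c_{\overline m,1}$ and therefore vanishes. So $v$ at level $k$ has no logarithm at all; in particular $c_{k,j}=0$ for $j\ge1$, which completes the induction and the first assertion. (We use implicitly that the coefficients of \eqref{eq-expression_g_n2z} are uniquely determined by $v$, which follows from the remainder bound $R_k=O(t^{k+\alpha})$ implied by \eqref{eq-regularity_h_k2z}.)

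Next I would deduce $v\in C^{\ell,\epsilon}(\bar G_r)$. Taking $k=\ell$, the expansion now is $v=\sum_{i=2}^{\ell}c_{i,0}(y')t^i+R_\ell$, with $c_{i,0}\in C^{\ell-i,\alpha}(B'_r)$ for $i<\overline m$ and $c_{i,0}\in C^{\ell-i,\epsilon}(B'_r)$ for $i\ge\overline m$. It suffices to show $D^\tau_{y'}\partial_t^m v\in C^{\epsilon}(\bar G_r)$ whenever $\tau+m\le\ell$. For $m\le1$ this is part of \eqref{eq-Quasi-TangentialRegu}. For $m\ge\overline m$, apply Theorem~\ref{Thm-MainThm} at level $k=m$ (log--free, by the induction above): then $\partial_t^m v=m!\,c_{m,0}(y')+\partial_t^m R_m$, where $D^\tau_{y'}c_{m,0}\in C^{\epsilon}$ for $\tau\le\ell-m$ and, by Remark~\ref{Remark-MainThm}, $D^\tau_{y'}\partial_t^m R_m\in C^{\epsilon}(\bar G_r)$ for $\tau\le\ell-m$. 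For $2\le m<\overline m$, argue by induction on $m$: from the equation \eqref{gtt}, $\partial_t^m v=\partial_t^{m-2}(-p\,v_t/t-q\,v/t^2-F)$, and substituting $v_{tt}$ back in repeatedly the $t^{-1}$--singular contributions cancel because of the identities among $c_2,c_3,\dots$ forced by $v$ being a solution, leaving an expression in $v$, $v_t$ and $F,\partial_tF,\dots,\partial_t^{m-2}F$ whose arguments are the quantities in \eqref{gtt-Coefficients}; its $D^\tau_{y'}$--derivatives for $\tau\le\ell-m$ are then controlled by \eqref{eq-Quasi-TangentialRegu} and by the already-established cases $m'<m$. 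Combining the three ranges gives $v\in C^{\ell,\epsilon}(\bar G_r)$.

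The part I expect to be delicate is the last one. After the logarithms are gone, neither $\sum_{i=2}^{\ell}c_{i,0}(y')t^i$ nor $R_\ell$ lies in $C^{\ell,\epsilon}(\bar G_r)$ individually --- the coefficients $c_{i,0}$ with $i\ge\overline m$ are only $C^{\ell-i,\epsilon}$ --- so one must choose the splitting $v=v_k+R_k$ at the level $k$ tailored to the number $m$ of normal derivatives being estimated, and handle the low range $m<\overline m$ through the equation with a careful verification that the singular $t^{-1}$--terms really do cancel. The vanishing of all logarithmic coefficients, by contrast, is essentially bookkeeping on the linear structure already laid bare in the proof of Theorem~\ref{Thm-MainThm}.
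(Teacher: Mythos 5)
Your proposal is correct and follows essentially the same route as the paper: an induction on the level $k$ of the expansion showing that, once $c_{\overline m,1}=0$, the forcing $F_{k-2}$ in Step~1 of the proof of Theorem~\ref{Thm-MainThm} is logarithm-free, so the only logarithm that can be generated in the upward iteration \eqref{eq-Iteration_g} comes from the homogeneous power $t^{\overline m+2-k}$ passing through $t^1$, and its contribution is exactly the $t^{\overline m}\log t$ term whose coefficient vanishes by hypothesis (and by the uniqueness of the coefficients, which you rightly note); then the regularity is read off from the log-free expansion at level $k=m$ for $m\ge\overline m$.

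The one place where you deviate is the range $2\le m\le\overline m-1$ of normal derivatives, which you flag as delicate and propose to handle by differentiating \eqref{gtt} directly and arguing that the $t^{-1}$-singular contributions cancel. No such cancellation argument is needed: the inductive step in the proof of Theorem~\ref{thrm-regularityC^{n-1}} (via the integral representation \eqref{eq:r8} and Lemma~\ref{MainLemma}) already yields $D_{y'}^{\tau}v\in C^{m,\alpha}(\bar G_r)$ for $\tau\le\ell-m-1$ and, in the form \eqref{IterRegFk}, gives $D_{y'}^{\ell-m}\partial_t^{m}v\in C^{\alpha}(\bar G_r)$ directly for $m\le\overline m-1$, with no loss to $\epsilon$ in this range. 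Citing that proof closes the only soft spot in your argument; everything else matches the paper's proof.
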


\begin{proof} 
First, we have $c_{\overline{m},1}=0$ by the assumption 
and $N_{\overline{m}}=1$ by Lemma \ref{lemma-Main-UpTo-n}. 
Inductively for any $\overline{m}+1\le k\le \ell$, we assume $c_{i,j}=0$ 
for any $\overline{m}\le i\le k-1$ and $1\le j\le N_i$. 
Now, we examine Steps 1-3 in the proof of Theorem \ref{Thm-MainThm} and prove 
$c_{k,j}=0$ for any $1\le j\le N_k$. 
For brevity, we write $a_i=a_{i,0}$ and $b_i=b_{i,0}$, for $\overline{m}-2\le i\le \ell-2$, 
and $c_i=c_{i,0}$, for $\overline{m}\le i\le \ell$. 

By \eqref{eq-relation-c-a-k}, we have $b_{i,j}=0$ for $\overline{m}-2\le i\le k-3$ and $1\le j\le M_i$. 
By \eqref{eq-expression-w-general-k} with $k$ replaced by $k-1$, 
we get 
\begin{align*}\label{eq-expression-w-general-k-no-log}
w=\sum_{i=0}^{k-3}b_i t^i
+T_{k-3}\quad\text{in }G_R.
\end{align*}
Instead of \eqref{eq-identity-F-int-k}, we have 
\begin{equation*}
F=\sum_{i=0}^{k-2} a_it^i
+S_{k-2}\quad\text{in }G_R.\end{equation*}
Then, in computing $v$ in \eqref{eq-expression-v-k-intermediate}, there are no logarithmic terms. In 
other words, we obtain 
\begin{align}\label{eq-expression-v-k-intermediate-no-log}
v=\sum_{i=2}^{k}c_it^i+R_k.
\end{align}
Therefore, $c_{k,j}=0$ for $1\le j\le N_i$. 

By \eqref{eq-expression-v-k-intermediate-no-log}, we conclude $D_{y'}^{\tau} \partial_t^kv\in C^\epsilon(\bar G_r)$, 
for any $\tau\le \ell-k$ and any $\epsilon\in (0,\alpha)$. 
This holds for any $k=\overline{m}, \cdots, \ell$. 
Hence, $D_{y'}^{\tau} \partial_t^\nu v\in C^\epsilon(\bar G_r)$, 
for any nonnegative integers $\tau$ and $\nu$ with $\tau+\nu\le \ell$, 
any $r\in (0, R)$, and any $\epsilon\in (0,\alpha)$.  
This implies the desired result. 
\end{proof}

In the next result, we estimate the largest power of the logarithmic factors. 
For simplicity, we state in the smooth category. 

\begin{cor}\label{cor-EstimateN}
Let $v$, $\underline{m}$, $\overline{m}$, 
and $F$ be as in Theorem \ref{Thm-MainThm}. 
Assume $F$ is smooth in all of its arguments and 
\be\label{eq-Linearity}F\text{ is linear in }\frac{v^2}{t^3},\frac{vv_t}{t^2}, \frac{v_t^2}{t}.\ee
Then, for any $i\ge \overline{m}$, 
\begin{equation}\label{eq-estimate-Ni}N_i\leq \Big[\frac{i-1}{\overline{m}-1}\Big],\end{equation}
where $[x]$ denotes the integer part of $x$. 
\end{cor}

\begin{proof} We will prove the following statement: 
If $t^k(\log t)^j$ appears in $v$ for some $k\ge \overline{m}$, then 
\be\label{eq-InductionN}k\ge (\overline{m}-1)j+1.\ee
We prove  \eqref{eq-InductionN} by an induction on $k$. If $k=\overline{m}$, then the corresponding 
$j$ is either 0 or 1, and hence  \eqref{eq-InductionN} holds. 
Suppose \eqref{eq-InductionN} holds for any $k=\overline{m}, \cdots, l-1$ 
for some $l\ge \overline{m}+1$. We now consider $k=l$. The proof is by a 
computation based on \eqref{gtt} and \eqref{gtt-Coefficients}. 

Let $t^l(\log t)^j$ be one term in $v$. 
We substitute such a term in \eqref{gtt} and note 
\begin{align*}
&\big(t^l(\log t)^j\big)^{\prime\prime}+p\frac{\big(t^l(\log t)^j\big)^{\prime}}{t}
+q\frac{t^l(\log t)^j}{t^2}\\
&\qquad=(l-\overline{m})(l-\underline{m})t^{l-2}(\log t)^j+j(2l-1+p)t^{l-2}(\log t)^{j-1}\\
&\qquad\qquad+j(j-1)t^{l-2}(\log t)^{j-2}.\end{align*}
The term $t^{l-2}(\log t)^j$ has the highest power of $\log t$ and 
a nonzero coefficient since $l\ge \overline{m}+1$. 
Next, we find the corresponding term in $F$. 
Set
\begin{align*}
\overline{v}(y',t)&=\sum_{i=2}^{l-1} \sum_{j=0}^{N_i} c_{i, j}(y')t^{i}(\log t)^j. 
\end{align*}
By the induction hypothesis, each pair $i$ and $j$ in $\overline{v}$ satisfy 
\eqref{eq-InductionN}, with $k$ replaced by $i$. We now substitute $\overline{v}$ in \eqref{gtt}
and identify $t^{l-2}(\log t)^j$. 
First, we note that 
all terms $t^i(\log t)^j$ in $\overline{v}_t, \frac{\overline{v}}{t}$ satisfy
$i\ge (\overline{m}-1)j$ and that all terms $t^i(\log t)^j$ in 
$\frac{\overline{v}_t}{t}, \frac{\overline{v}}{t^2}$ satisfy $i\ge (\overline{m}-1)j-1$. Hence, 
all terms $t^i(\log t)^j$ in 
$\frac{\overline{v}^2}{t^3},\frac{\overline{v} \overline{v}_t}{t^2}, 
\frac{\overline{v}_t^2}{t}$ with $i\le l-2$
satisfy $i\ge (\overline{m}-1)j-1$. Therefore, these three terms are dominant. 
Recall that $F$ is smooth in 
$$\overline{v}_t,\, \frac{\overline{v}}{t},\, \frac{\overline{v}^2}{t^3},\,
\frac{\overline{v} \overline{v}_t}{t^2},\,
\frac{\overline{v}_t^2}{t},$$ 
and is linear in the last three quantities by \eqref{eq-Linearity}. 
If we expand $F$ in terms of $t^i(\log t)^j$, then all terms $t^i(\log t)^j$ with $i\le l-2$ 
satisfy $i\ge (\overline{m}-1)j-1$. With $i=l-2$, we obtain 
$l\ge (\overline{m}-1)j+1$. 
This is \eqref{eq-InductionN} for $k=l$. 
\end{proof}

We now make two remarks to end this section. 

\begin{remark}\label{remark-double-indices}
Theorem \ref{Thm-MainThm} is formulated with two indices $\ell$ and $k$, 
with $\ell$ for the regularity and $k$ for the order of the expansion 
in \eqref{eq-expression_g_n2z}. In this formulation, the remainder $R_k$ in \eqref{eq-expression_g_n2z}
has an optimal regularity. Had we expanded $v$ up to degree $\ell$, 
the corresponding $R_\ell$ would have a much lower regularity since it contains derivatives of $v$ up to order $\ell$. 
The present formulation of Theorem \ref{Thm-MainThm} plays an important role in Section \ref{Sec-Decomposition}, 
in particular, in the proof of Theorem \ref{thrm-Decomposition-finite}. 
\end{remark}

\begin{remark}\label{remark-integer}
Results in this section 
are established based on an essential assumption that $\overline{m}$ is an integer. 
Under this assumption, the function $\log t$ and its powers provide obstacles to the higher regularity. 
If $\overline{m}$ is not an integer, we introduce $\gamma=\overline{m}-[\overline{m}]\in (0,1)$, 
where $[\overline{m}]$ is the integer part of $\overline{m}$. 
In this case, $t^\gamma$ and its powers provide obstacles to the higher regularity. 
If $\gamma$ is irrational, there are infinitely many such powers. 
If $\gamma$ is rational, there are only finitely many such powers. 
However, it is quite complicated to formulate a result similar as Theorem \ref{Thm-MainThm}. 
The optimal regularity has different forms for the three different cases $\alpha<\gamma$, $\alpha=\gamma$, and 
$\alpha>\gamma$. We will not pursue along this direction. 
When we write our main equation \eqref{eq-Intro-Equ} in the form \eqref{gtt}, 
we have $\overline{m}=n+1$, an integer. Refer to Section \ref{sec-Setup} for details. 
\end{remark} 
 
\section{The Concise Boundary Regularity}\label{Sec-Decomposition}

In this section, we prove two results concerning concise boundary regularity. 

Let $v$ be the solution as in Theorem \ref{Thm-MainThm}. 
We intend to decompose $v$ according to the singular form $\log t$ and analyze the regularity of the rest of functions. 
Under the assumptions of Theorem \ref{Thm-MainThm}, there exist 
$c_i\in C^{\ell-i,\alpha}(B'_R)$, for $2\le i\le \overline{m}-1$,  
and $c_{i,j}\in C^{\ell-i,\epsilon}(B'_R)$, for $\overline{m}\le i\le \ell$, $0\le j\le N_i$, and any $\epsilon\in (0,\alpha)$, such that 
\begin{align*}
v=\sum_{i=2}^{\overline{m}-1}c_i t^i
+\sum_{i=\overline{m}}^{\ell}\Big[ \sum_{j=0}^{N_i} c_{i, j}(\log t)^j\Big]t^{i}
+R_{\ell},\end{align*}
where $R_\ell$ is the remainder. 
For each $i\ge \overline{m}$,  $N_i$ is the largest power of $\log t$ for $t^i$. 
We now introduce a sequence of positive integers $\{I_j\}$ such that 
$$j\le N_i\text{ if and only if }i\ge I_j,$$
with the equivalence of equalities. Then, $I_j\ge\overline{m}$ for any $j\ge 1$. 
We write $v$ as 
\begin{align}\label{eq-expansion-ell}
v=\sum_{i=2}^{\ell} c_{i, 0}t^{i}
+\sum_{j=1}^{N_\ell}\Big[ \sum_{i=I_j}^{\ell} c_{i, j}t^{i}\Big](\log t)^j
+R_{\ell},\end{align}
where $c_{i,0}=c_i$ for $i=2, \cdots, \overline{m}-1$. 
Then, 
\begin{align}\label{eq-expansion-ell-log}
v=w_0+w_1\log t+\cdots+w_{N_\ell}(\log t)^{N_\ell},
\end{align}
where 
\begin{align*}w_0&=\sum_{i=2}^{\ell} c_{i, 0}(y')t^{i}+R_{\ell}(y',t),\\
w_j&=\sum_{i=I_j}^{\ell} c_{i, j}(y')t^{i}\quad\text{for }j=1, \cdots, N_{\ell}.
\end{align*}
By the regularity of $c_{i,j}$ in Theorem \ref{Thm-MainThm}, we know 
$w_0, w_1, \cdots, w_{N_\ell}\in C^{\epsilon}(\bar{G}_r)$, for any $r\in (0, R)$. 
Comparing with the assumption that $F$ is $C^{\ell-2,\alpha}$ in all its argument, there is a huge loss of regularity. 
In one main result in this section, we will establish \eqref{eq-expansion-ell-log}
for appropriate $w_0, w_1, \cdots, w_{N_\ell}\in C^{\ell, \epsilon}(\bar{G}_r)$, for any $r\in (0, R)$.
We will use the full power of Theorem \ref{Thm-MainThm} 
for this purpose. Refer to Remark \ref{remark-double-indices}. 

First, we need an extension lemma. 

\begin{lemma}\label{lemma-extensions} Let $\ell$ be a nonnegative integer, $\alpha\in (0,1)$ 
be a constant, and $c_0, c_1, \cdots, c_\ell$ be given functions on $\bar B'_R$ with 
$c_i\in C^{\ell-i,\alpha}(\bar B'_R)$ for $0\le i\le \ell$. Then, there exists 
a function $w\in C^{\ell,\alpha}(\bar G_R)$ such that, 
for any $0\le i\le \ell$,  
\begin{equation}\label{eq-Taylor-degree}
\partial^i_tw(\cdot, 0)=c_{i}\quad\text{on }B_R'.\end{equation}
\end{lemma}

By \eqref{eq-Taylor-degree}, the Taylor polynomial of $w$ with respect to $t$ 
of degree $\ell$ is given by
$$\sum_{i=0}^{\ell}\frac{1}{i!}c_{i}(y') t^i.$$
However, we cannot define $w$ simply by the above sum since it does not have 
a sufficient regularity. 
With the help of the theory of elliptic PDEs, we can construct $w$ by solving 
an elliptic equation of a sufficiently high order. 

\begin{proof}[Proof of Lemma \ref{lemma-extensions}] 
Consider a domain $\Omega$ with a smooth boundary 
such that $G_R\subseteq \Omega$ and $\partial G_R\cap \{t=0\}\subseteq\partial\Omega$, 
and extend $c_i$ to $\partial\Omega$ such that $c_i\in C^{\ell-i, \alpha}(\partial\Omega)$ for $0\le i\le \ell$. 
We solve 
\begin{align*}\Delta^{2(\ell+1)}w&=0\quad\text{in }\Omega,\\
\partial_\nu^iw&=c_i\quad\text{on }\partial\Omega,\,\text{ for }0\le i\le \ell,
\end{align*}
where $\nu$ is the interior normal vector to $\partial\Omega$. 
Then, such a $w$ has the required regularity. 
\end{proof}

In Corollary \ref{cor-EstimateN}, we proved \eqref{eq-estimate-Ni} under the assumption \eqref{eq-Linearity}. 
Note that 
\begin{equation}\label{eq-equivalence-i-j}j\le \frac{i-1}{\overline{m}-1}\, \Longleftrightarrow\,  i\ge j(\overline{m}-1)+1.
\end{equation}
Hence, \eqref{eq-estimate-Ni} implies 
\begin{equation}\label{eq-assumption-I}
I_j\ge j(\overline{m}-1)+1.
\end{equation}
In particular, $I_j\ge j$ since $\overline{m}\ge 2$.

We first discuss the case that $F$ is finitely differentiable with respect to $y'$. 

\begin{theorem}\label{thrm-Decomposition-finite}
Assume that $\underline{m}$ is a constant and $\overline{m}$ is an integer satisfying 
\eqref{eq-Assumption_m1} and \eqref{eq-Assumption_m2} and 
that $F$ is $C^{\ell-2,\alpha}$ in $y'$ and smooth in other 
arguments,  for some integers $\ell\ge\overline{m}$ and
some $\alpha\in (0,1)$, and satisfies \eqref{eq-Linearity}. 
Let $v\in C^{1,1}(\bar G_R)\cap C^{\ell, \alpha}(G_R)$ 
be a solution of \eqref{gtt} in $G_R$, 
for some $R>0$, satisfying
\eqref{eq-Quasi-TangentialRegu-assumption}  for any $r\in (0, R)$ and any 
$\tau\le \ell-2$. 
Then, there exist functions  $w_0, w_1, \cdots, w_{N_\ell}\in C^{\ell, \epsilon}(\bar{G}_r)$,  
for any $\epsilon\in (0,\alpha)$ and any $r\in (0,R)$, 
such that
\begin{equation}\label{eq-Main-decomposition-finite}
v=\sum_{j=0}^{N_\ell} w_j(\log t)^j\quad\text{in }G_R,\end{equation}
and, for each $j=1, \cdots, N_\ell$, 
\begin{equation}\label{eq-Main-vanishing-finite}
\partial_{t}^iw_j\big|_{t=0}=0\quad\text{for any }i=0, 1, \cdots, I_j-1.
\end{equation}
If, in addition, $\partial_t^{I_1}w_1(\cdot, 0)=0$ on $B_R'$, then $w_1, \cdots, w_{N_\ell}$ 
are identically zero and $v\in C^{\ell, \epsilon}(\bar{G}_r)$,  
for any $\epsilon\in (0,\alpha)$ and any $r\in (0,R)$. 
\end{theorem} 

If we expand $w_j$ in terms of $t$, \eqref{eq-Main-vanishing-finite} implies that 
$w_j$ starts with the term $t^{I_j}$. 

\begin{proof} By Theorem \ref{Thm-MainThm}, there exist functions
$c_{i,0}\in C^{\ell-i,\alpha}(B'_R)$, for $2\le i\le\overline{m}-1$,  
and $c_{i,j}\in C^{\ell-i,\epsilon}(B'_R)$, for $\overline{m}\le i\le \ell$, $0\le j\le N_i$, and any $\epsilon\in (0,\alpha)$, 
such that \eqref{eq-expansion-ell} holds, i.e., 
$$v=\sum_{i=2}^{\ell} c_{i, 0}t^{i}
+\sum_{j=1}^{N_\ell} \sum_{i=I_j}^{\ell} c_{i, j}t^{i}(\log t)^j
+R_{\ell}.$$
We set $c_{i,j}=0$ 
when the corresponding term is absent. 
For each $j=1, \cdots, N_\ell$, by Lemma \ref{lemma-extensions}, 
there exists a function $w_j\in C^{\ell, \epsilon}(\bar{G}_r)$,  
for any $r\in (0, R)$ and any $\epsilon\in (0,\alpha)$, such that  
\begin{equation}\label{eq-IV-wj}\partial_{t}^iw_j\big|_{t=0}=
\begin{cases} 0&\text{for }i=0, \cdots, I_j-1,\\
i!c_{i,j}&\text{for  }i=I_j, \cdots, \ell.\end{cases}\end{equation}
Set 
\begin{equation}\label{eq-definition-w0}w_0=v-\sum_{j=1}^{N_\ell} w_j(\log t)^j.\end{equation}
We will prove $w_0\in C^{\ell, \epsilon}(\bar{G}_r)$,  
for any $r\in (0, R)$ and $\epsilon\in (0,\alpha)$.

Fix an integer $k$ with $\overline{m}\le k\le\ell$ 
and fix an $r\in (0, R)$. We will prove that $\partial_t^\nu D_{y'}^\tau w_0\in C^\epsilon(\bar{G}_r)$, 
for any $\nu\le k$ and $\tau\le \ell-k$. 
By Theorem \ref{Thm-MainThm}, we have 
\begin{equation}\label{eq-definition-v-k}
v=\sum_{i=2}^{k} c_{i, 0}t^{i}
+\sum_{j=1}^{N_k} \sum_{i=I_j}^{k} c_{i, j}t^{i}(\log t)^j
+R_{k},\end{equation}
where $R_k$ is a function in $G_r$ such that
$D_{y'}^\tau\partial_t^\nu R_k\in C^{\epsilon}(\bar{G}_r)$, for 
any $\nu\le k$,  $\tau\le \ell-k$, and any $\epsilon\in (0,\alpha)$.
For each $j=1, \cdots, N_k$, by \eqref{eq-IV-wj}, we can write 
\begin{equation}\label{eq-definition-wj-k}w_j=\sum_{i=I_j}^kc_{i,j}t^i+S_{j,k},\end{equation}
where $S_{j,k}$ is a function in $G_r$ such that, for 
any $\nu\le k$,  $\tau\le \ell-k$, and any $\epsilon\in (0,\alpha)$, 
\begin{equation}\label{eq-property-wj-k1}
D_{y'}^\tau\partial_t^\nu S_{j,k}\in C^{k-\nu, \epsilon}(\bar{G}_r),\end{equation}
and 
\begin{equation}\label{eq-property-wj-k2}|D_{y'}^\tau\partial_t^\nu S_{j,k}|\le Ct^{k-\nu+\alpha}.\end{equation}
By \eqref{eq-definition-w0}, \eqref{eq-definition-v-k}, and \eqref{eq-definition-wj-k}, we get 
$$w_0=\sum_{i=2}^{k} c_{i, 0}t^{i}+R_k
-\sum_{j=1}^{N_k}S_{j,k}(\log t)^j-\sum_{j=N_{k}+1}^{N_\ell} w_j(\log t)^j.$$
Take any $\nu\le k$ and $\tau\le \ell-k$. 
By Theorem \ref{Thm-MainThm}, we have 
$\partial_t^\nu D_{y'}^\tau (c_{i, 0}t^{i})\in C^\epsilon(\bar{G}_r)$  for $2\le i\le k$, 
and $\partial_t^\nu D_{y'}^\tau R_k\in C^\epsilon(\bar{G}_r)$. 
For $1\le j\le N_k$, by \eqref{eq-property-wj-k1} and \eqref{eq-property-wj-k2}, we have 
$\partial_t^\nu D_{y'}^\tau \big(S_{j,k}(\log t)^j\big)\in C^\epsilon(\bar{G}_r)$. 
For $N_{k}+1\le j\le N_\ell$, by \eqref{eq-assumption-I} and \eqref{eq-Main-vanishing-finite}, 
we have $I_j\ge k+1$ and hence $\partial_t^\nu D_{y'}^\tau \big(w_j(\log t)^j\big)\in C^\epsilon(\bar{G}_r)$. 
In summary, we obtain $\partial_t^\nu D_{y'}^\tau w_0\in C^\epsilon(\bar{G}_r)$. 
This holds for any $\nu\le k$ and $\tau\le \ell-k$, and hence for any $\nu$ and $\tau$ with $\nu+\tau\le \ell$.
Therefore, $w_0\in C^{\ell, \epsilon}(\bar{G}_r)$,  
for any $\epsilon\in (0,\alpha)$ and any $r\in (0,R)$.
\end{proof} 

Next, we discuss the case that $F$ is smooth in all its arguments. 
As $\ell$ increases, the number of terms in the right-hand side of \eqref{eq-Main-decomposition-finite} 
also increases. It is not clear that the summation in \eqref{eq-Main-decomposition-finite} converges 
as $\ell\to\infty$. In the next result, we treat $t^{\overline{m}-1}\log t$ as an additional self-variable.

\begin{theorem}\label{thrm-Decomposition-infinite}
Assume that $\underline{m}$ is a constant and $\overline{m}$ is an integer satisfying 
\eqref{eq-Assumption_m1} and \eqref{eq-Assumption_m2} and 
that $F$ is smooth in its arguments and satisfies \eqref{eq-Linearity}. 
Let $v\in C^{1,1}(\bar G_R)\cap C^{\ell, \alpha}(G_R)$ 
be a solution of \eqref{gtt} in $G_R$, 
for some $R>0$, satisfying  
\eqref{eq-Quasi-TangentialRegu-assumption} 
for any $r\in (0, R)$ and for any $\tau\ge 0$. 
Then, $v$ is smooth in $y', t$, and $t^{\overline{m}-1}\log t$ in $\bar{G}_r$,  
for  any $r\in (0,R)$. 
If, in addition, $\partial_{t}\partial_{t^{\overline{m}-1}\log t}v|_{t=0}=0$ on $B_R'$, 
then $v\in C^{\infty}(\bar{G}_r)$,  
for any $r\in (0,R)$. 
\end{theorem}

\begin{proof} By applying Theorem \ref{Thm-MainThm} successively for each $\ell\ge \overline{m}$, we obtain 
smooth functions $c_2, \cdots, c_{\overline{m}-1}$ and $c_{i,j}$ for $i\ge \overline{m}$ and $0\le j\le N_i$ in $B_R'$ such that 
\begin{align}\label{eq-expression-any-ell}
v(y', t)=\sum_{i=2}^{\overline{m}-1}c_i(y') t^i
+\sum_{i=\overline{m}}^{\ell} \sum_{j=0}^{N_i} c_{i, j}(y')t^{i}(\log t)^j
+R_{\ell}(y',t),\end{align}
where $R_\ell$ is $C^{\ell,\epsilon}$ in $\bar{G}_r$, for any $\epsilon\in (0,\alpha)$ and any $r\in (0,R)$. 
For unification, we write $c_{i,0}=c_{i}$ for $2\le i\le \overline{m}-1$. Consider a formal expansion 
\begin{align*}
v_*=\sum_{i=1}^\infty \sum_{j=0}^{N_i}c_{i,j}t^i(\log t)^j, 
\end{align*}
where we set $c_{i,j}=0$ when such term is absent in the expansion of $v$. 
Under the assumption \eqref{eq-Linearity}, \eqref{eq-estimate-Ni} holds. 
Note the equivalence in \eqref{eq-equivalence-i-j} and the estimate \eqref{eq-assumption-I}. 
By exchanging the order of summations, we get 
\begin{align*}
v_*=\sum_{j=0}^\infty \sum_{i=j(\overline{m}-1)+1}^{\infty}c_{i,j}t^i(\log t)^j.
\end{align*}
For each fixed pair $i$ and  $j$, introduce $k$ such that 
\begin{align}\label{eq-relation-ijk} 
i=j(\overline{m}-1)+k.\end{align} 
Throughout the proof, any $i,j,k$ are related by \eqref{eq-relation-ijk}. 
Then, 
\begin{align*}
v_*=\sum_{j=0}^\infty \sum_{k=1}^{\infty}c_{j(\overline{m}-1)+k,j}t^{j(\overline{m}-1)+k}(\log t)^j
=\sum_{j=0}^\infty \sum_{k=1}^{\infty}c_{j(\overline{m}-1)+k,j}t^k(t^{\overline{m}-1}\log t)^j.
\end{align*}
We now write 
\begin{align}\label{eq-Form-Series}
v_*=\sum_{l=1}^\infty\sum_{k+j=l}c_{j(\overline{m}-1)+k,j}t^k(t^{\overline{m}-1}\log t)^j.
\end{align}
We emphasize that \eqref{eq-Form-Series} is a formal expansion. The series in the right-hand side may not converge. 
Following \cite{HanWang}, we now introduce a sequence of cutoff functions to construct a convergent series. 

Set 
\begin{align}\label{eq-definition-TS} 
T =t,\quad S =-t^{\overline{m}-1}\log t.\end{align} 
For simplicity, we assume $R\in (0,1)$. For each $l\ge 1$, set 
\begin{align}\label{eq-definition-w-l} 
w_l(y',T,S)=\sum_{k+j=l}(-1)^jc_{j(\overline{m}-1)+k,j}(y')T^kS^j.
\end{align}
This is a homogeneous polynomial in $T$ and $S$ of degree $l$, with coefficients given by smooth functions of $y'$. 
Take a cutoff function $\eta\in C_0^\infty(-R,R)$ with $\eta=1$ on $(-R/2,R/2)$. 
For any constant $\lambda_l\ge 1$, we observe that
$\eta(\lambda_l {T}) =0$ if $|{T} |\geq {1}/{\lambda_l}$ and $\eta(\lambda_l {T}) =1$ if  $|{T} |\leq {1}/{(2\lambda_l)}$. 
A similar result holds for $\eta(\lambda_l {S})$. 
It is straightforward to verify that, for any $\alpha \in \mathbb{Z}_+^{n+1}$ with $|\alpha|<l$, 
\begin{align*}
\big| \partial^\alpha_{y',T,S} \big[w_l(y', T, S) \eta(\lambda_l{T})\eta(\lambda_l{S})\big] \big| 
\leq \frac{C_{l, \alpha}}{\lambda_l^{l - |\alpha|}},
\end{align*}
for some positive constant $C_{l, \alpha}$ depending only on $w_l$ and $\alpha$. 
By choosing $\lambda_l$ sufficiently large, we get 
\begin{align}\label{eq-decay-w}
\|w_l(y', T, S) \eta(\lambda_l{T})\eta(\lambda_l{S})\|_{C^{l-1}(B_R\times I\times I)} 
\leq 2^{-l},
\end{align}
where $I=(-R, R)$. Next, set 
$$w(y',T,S)=\sum_{l=1}^\infty w_l(y', T, S)\eta(\lambda_l{T})\eta(\lambda_l{S}).$$
We note that each term in the summation is smooth in $y', T$, and $S$. 
By \eqref{eq-decay-w}, we obtain, for any positive integer $m$, 
\begin{align*}
\sum_{l=m+1}^\infty\|w_l(y', T, S) \eta(\lambda_l{T})\eta(\lambda_l{S})\|_{C^{m}(B_R\times I\times I)} 
\leq \sum_{l=m+1}^\infty 2^{-l}=2^{-m}.
\end{align*}
Hence, $w$ is $C^m$ in $y', T$, and $S$. This is true for any positive integer $m$. 
Therefore, $w$ is smooth in $y', T$, and $S$.

Next, with \eqref{eq-definition-TS}, we have 
\begin{align}\label{eq-expression-w}
w(y',t,-t^{\overline{m}-1}\log t)=\sum_{l=1}^\infty w_l(y', t, -t^{\overline{m}-1}\log t)
\eta(\lambda_l{t})\eta(-\lambda_l{t^{\overline{m}-1}\log t}).\end{align}
For each $l\ge 1$, we set 
\begin{equation}\label{eq-definition-w-tilde-l1}\widetilde w_l(y',t)=w_l(y', t, -t^{\overline{m}-1}\log t).\end{equation}
By \eqref{eq-definition-w-l}, we have 
\begin{equation}\label{eq-definition-w-tilde-l2}
\widetilde w_l(y',t)=\sum_{k+j=l}c_{j(\overline{m}-1)+k,j}t^{j(\overline{m}-1)+k}(\log t)^j.
\end{equation}
With $k+j=l$, we have $j(\overline{m}-1)+k=l+j(\overline{m}-2)$, and hence
$$\widetilde w_l(y',t)=\sum_{k+j=l}c_{l+j(\overline{m}-2),j}t^{l+j(\overline{m}-2)}(\log t)^j.$$
We always assume $\overline{m}\ge 2$. Then, the lowest power of $t$ is $l$, given by $j=0$. 
Therefore, by taking $\lambda_l$ larger if necessary, we obtain 
\begin{align}\label{eq-decay-w-tilde}
\|\widetilde w_l(y',t)\eta(\lambda_l{t})\eta(-\lambda_l t^{\overline{m}-1}\log t)\|_{C^{l-1}(B_R\times I)} 
\leq 2^{-l}.
\end{align}
Fix an arbitrary $m\ge \overline{m}$ and set 
\begin{align}\label{eq-definition-Sm} 
S_m(y',t)=\sum_{l=m+1}^\infty \widetilde w_l(y',t)\eta(\lambda_l{t})\eta(-\lambda_l t^{\overline{m}-1}\log t).
\end{align} 
By \eqref{eq-decay-w-tilde}, we obtain
\begin{align*}
\sum_{l=m+1}^\infty\|\widetilde w_l(y',t)\eta(\lambda_l{t})\eta(-\lambda_l t^{\overline{m}-1}\log t)\|_{C^{m}(B_R\times I)}  
\leq \sum_{l=m+1}^\infty 2^{-l}=2^{-m}.
\end{align*}
Hence, $S_m\in C^m(\bar B_r\times I)$ for any $r\in (0,R)$. 
By \eqref{eq-expression-w}, \eqref{eq-definition-w-tilde-l1}, and \eqref{eq-definition-Sm}, we obtain 
\begin{align}\label{eq-decomposition-w-9}
w(y',t,-t^{\overline{m}-1}\log t)=\sum_{l=1}^m \widetilde w_l(y', t)
\eta(\lambda_l{t})\eta(-\lambda_l{t^{\overline{m}-1}\log t})+S_m(y't).\end{align}
We point out that summation in the right-hand side contains terms with regularity worse than $C^m$. 
We now prove that this summation also contains all terms in $v$ with regularity worse than $C^m$. 

For the same arbitrarily fixed $m\ge \overline{m}$, set 
\begin{equation}\label{eq-definition-M}
M=\max\{j(\overline{m}-1)+k;\, k+j\le m, k\ge 1, j\ge 0\}.\end{equation}
It is easy to see that $M\ge m$. By Theorem \ref{Thm-MainThm}, we have 
\begin{align}\label{eq-expression-v-M}
v=\sum_{i=1}^{M} \sum_{j=0}^{N_i} c_{i, j}t^{i}(\log t)^j
+R_{M},\end{align}
and $R_M\in C^M(\bar{G}_r)\subset C^m(\bar{G}_r)$. 
By the definition of $M$ in \eqref{eq-definition-M}, any $i$ as in \eqref{eq-relation-ijk} with $k+j\le m$ satisfies $i\le M$. 
Hence, we can decompose the summation in the right-hand side of \eqref{eq-expression-v-M} into two parts
according to $k+j\le m$ or $k+j\ge m+1$. Thus, 
\begin{align*}v&=\sum_{k+j\le m}c_{j(\overline{m}-1)+k,j}t^{j(\overline{m}-1)+k}(\log t)^j\\
&\qquad+\sum_{\substack{k+j\ge m+1\\ i\le M, j\le N_i}}c_{j(\overline{m}-1)+k,j}t^{j(\overline{m}-1)+k}(\log t)^j+R_M.\end{align*}
In the second summation, the power of $t$ is given by  $j(\overline{m}-1)+k\ge j+k\ge m+1$. 
We now use the elementary fact that $t^{m+1}(\log t)^j$ is a $C^m$ function on $[0,1]$. Hence, 
each term in the second summation is in $C^m(\bar{G}_r)$, for any $r\in (0,R)$. 
We can combine the second summation with $R_M$ and write 
\begin{align*}v=\sum_{k+j\le m}c_{j(\overline{m}-1)+k,j}t^{j(\overline{m}-1)+k}(\log t)^j+\widetilde{R}_m,\end{align*}
for some $\widetilde{R}_m\in C^m(\bar{G}_r)$, for any $r\in (0,R)$. 
By \eqref{eq-definition-w-l}, we obtain 
\begin{align*}v=\sum_{l=1}^{m}\widetilde w_l+\widetilde{R}_m.\end{align*}
By introducing the cutoff functions $\eta(\lambda_l{t})\eta(-\lambda_l t^{\overline{m}-1}\log t)$ for each term in the summation, 
we write
\begin{align*}v=\sum_{l=1}^{m}\widetilde w_l\eta(\lambda_l{t})\eta(-\lambda_l t^{\overline{m}-1}\log t)
+\sum_{l=1}^{m}\widetilde w_l\big[1-\eta(\lambda_l{t})\eta(-\lambda_l t^{\overline{m}-1}\log t)\big]
+\widetilde{R}_m.\end{align*}
In the second summation, each term is 0 for small $t$ and hence is smooth in $\bar{G}_r$ for any $r\in (0,R)$. 
Therefore, 
\begin{align}\label{eq-definition-w-l2}v(y',t)=\sum_{l=1}^{m}
\widetilde w_l(y',t)\eta(\lambda_l{t})\eta(-\lambda_l t^{\overline{m}-1}\log t)
+\widehat{R}_m(y',t),\end{align}
for some $\widehat{R}_m\in C^m(\bar{G}_r)$ for any $r\in (0,R)$.

Finally, we set 
$$u(y', t)= v(y', t)-w(y', t, t^{\overline{m}-1}\log t).$$
By \eqref{eq-definition-w-l2} and \eqref{eq-decomposition-w-9}, we obtain 
$u=\widehat{R}_m-S_m\in C^m(\bar{G}_r)$ for any $r\in (0,R)$. This holds for any $m\ge\overline{m}$. 
Thus, $u\in C^\infty(\bar{G}_r)$ for any $r\in (0,R)$. 
As a consequence,  
$v = u(y', T) + w(y', T, S)$ is smooth in $y', T, S$ and the theorem is proved. 
\end{proof} 

Without the assumption \eqref{eq-Linearity}, we can prove a weaker result
that $v$ is smooth in $y', t$, and $t\log t$ in $\bar{G}_r$,  
for  any $r\in (0,R)$. 

\appendix 

\section{Calculus Lemmas}\label{Appen-CalculusL}

In this section, we list several calculus lemmas concerning 
H\"older continuous functions. 
We denote by $y=(y', t)$ points in $\mathbb R^n$ and set, for any constant $r>0$, 
$$G_r=B_r'\times (0,r).$$ 

\begin{lemma}\label{lemma-BasicHolderRegularity}  Let $k$ and $l$ be nonnegative integers, 
$a> 0$ and $\alpha\in (0,1]$ be constants, and $f\in C^\alpha(\bar G_r)$ be a function. 
Define, 
for any $(y',t)\in G_r$, 
$$F(y', t)=\frac{1}{t^{a}}\int_0^ts^{a-1}f(y',s)ds.$$
Suppose 
$\partial_t^\nu D_{y'}^\tau f \in C^{\alpha}(\bar{G}_r)$, for any 
$\tau\le k$ and $\nu\le l$. 
Then, $\partial_t^\nu D_{y'}^\tau F\in C^{\alpha}(\bar{G}_r)$, for any 
$\tau\le k$ and $\nu\le l$. 
Moreover, if $\partial_t^\nu f(y',0)=0$ for any $\nu\le l$, then, for any $\tau\le k$ and 
$\nu\le l$ and any $(y',t)\in G_r$,  
$$|\partial_t^\nu D_{y'}^\tau F(x',t)|\le C[\partial_t^l D_{y'}^\tau f]_{C^\alpha(\bar G_r)}t^{l-\nu+\alpha},$$ 
where $C$ is a positive constant depending only on $l$, $a$ and $\alpha$. 
\end{lemma}

\begin{lemma}\label{lemma-BasicHolderRegularity2}  
Let $k$ and $l$ be nonnegative integers, 
$a$ and $\alpha\in (0,1]$ be constants with $a>l+\alpha$, and $f\in C^\alpha(\bar G_r)$ be a function. 
Define, 
for any $(y',t)\in G_r$, 
$$F(y', t)=t^{a}\int_t^rs^{-a-1}f(y',s)ds.$$
Suppose 
$\partial_t^\nu D_{y'}^\tau f \in C^{\alpha}(\bar{G}_r)$, for any 
$\tau\le k$ and $\nu\le l$. 
Then, $\partial_t^\nu D_{y'}^\tau F\in C^{\alpha}(\bar{G}_r)$, for any 
$\tau\le k$ and $\nu\le l$. 
Moreover, if $\partial_t^\nu f(y',0)=0$ for any $\nu\le l$, then, for any $\tau\le k$ and 
$\nu\le l$ and any $(y',t)\in G_r$,  
$$|\partial_t^\nu D_{y'}^\tau F(y',t)|\le C[\partial_t^l D_{y'}^\tau f]_{C^\alpha(\bar G_r)}t^{l-\nu+\alpha},$$ 
where $C$ is a positive constant depending only on $l$, $a$ and $\alpha$. 
\end{lemma}

We now generalize Lemma \ref{lemma-BasicHolderRegularity} 
to allow $a$ to be a nonpositive constant. 
We rename the constant $a$ and consider only the case of a  
nonpositive integer. 

\begin{lemma}\label{lemma-BasicHolderRegularity1}  
Let $k, l, m$, and $a$ be nonnegative integers with $l\ge m\ge a$, 
$\alpha\in (0,1]$ be a constant, and $f\in C^\alpha(\bar G_r)$ be a function. 
Define, 
for any $(y',t)\in G_r$, 
$$F(y', t)=t^{a}\int_0^ts^{-a-1}f(y',s)ds.$$
Suppose 
$\partial_t^\nu D_{y'}^\tau f \in C^{\alpha}(\bar{G}_r)$ for any 
$\tau\le k$ and $\nu\le l$, and 
$\partial_t^\nu D_{y'}^\tau f(\cdot, 0)=0$ for any 
$\tau\le k$ and $\nu\le m$. 
Then, for any 
$\tau\le k$ and $\nu\le l$ and any $\epsilon\in (0,\alpha)$, 
$$\partial_t^\nu D_{y'}^\tau F\in C^{\epsilon}(\bar{G}_r),$$ 
and, for any 
$\tau\le k$ and $\nu\le m$ and any $(y',t)\in G_r$,  
$$|\partial_t^\nu D_{y'}^\tau F(y',t)|
\le C[\partial_t^m D_{y'}^\tau f]_{C^\alpha(\bar G_r)}t^{m-\nu+\alpha},$$ 
where $C$ is a positive constant depending only on $l$ and $\alpha$. 
\end{lemma}

We note that there is a slight loss of regularity for $F$ in Lemma \ref{lemma-BasicHolderRegularity1}
in the case $l=m=a$. 

We also need the following result on the regularity of functions of $y'$. 

\begin{lemma}\label{lemma-BasicHolderRegularity1-int0}  
Let $a$ and $k$ be nonnegative integers, 
$\alpha\in (0,1)$ be a constant,  
and $f\in C^\alpha(\bar G_r)$ be a function. 
Define, 
for any $y'\in B'_r$, 
$$b(y')=\int_0^rs^{-a-1}f(y',s)ds.$$
Suppose 
$\partial_t^\nu D_{y'}^\tau f \in C^{\alpha}(\bar{G}_r)$ and 
$\partial_t^\nu D_{y'}^\tau f(\cdot, 0)=0$, for any 
$\tau\le k$ and $\nu\le a$. 
Then, for any $\epsilon\in (0,\alpha)$,  
$$b\in C^{k,\epsilon}(\bar B'_r).$$ 
\end{lemma}

For an illustration, we present a proof of Lemma \ref{lemma-BasicHolderRegularity1}, 
demonstrating the loss of regularity.

\begin{proof}[Proof of Lemma \ref{lemma-BasicHolderRegularity1}] 
We only consider $k=0$. 

First, we derive estimates of $F$. By a change of 
variables $s=t\rho$, we have 
$$F(y',t)=\int_0^1\rho^{-a-1}f(y',t\rho)d\rho,$$
and hence, for any $0\le \nu\le l$, 
$$\partial_t^\nu F(y',t)=\int_0^1\rho^{\nu-a-1}\partial_t^\nu f(y',t\rho)d\rho.$$
Next, we fix a $\nu\le m$. By 
$$|\partial_t^\nu f(y',t)|\le [\partial_t^{m}  f]_{C^\alpha(\bar G_r)}t^{m-\nu+\alpha},$$ 
we have, 
for any $(y',t)\in G_r$,  
\begin{align*}
|\partial_t^\nu F(y',t)|&\le [\partial_t^{m}  f]_{C^\alpha(\bar G_r)}t^{m-\nu+\alpha}
\int_0^1\rho^{m+\alpha-a-1}d\rho\\
&=\frac{1}{m+\alpha-a}[\partial_t^{m}  f]_{C^\alpha(\bar G_r)}t^{m-\nu+\alpha}.\end{align*}

For the H\"older semi-norm of $\partial_t^\nu F$ in $t$, it suffices to consider the case 
$\nu\ge m$. For any $(y',t_1)$ and $(y',t_2)$, we have 
$$|\partial_t^\nu f(y',t_1\rho)-\partial_t^\nu f(y',t_2\rho)|
\le [\partial_t^\nu f]_{C^\alpha(\bar G_r)}\rho^\alpha|t_1-t_2|^{\alpha}.$$
Then, 
\begin{align*}
|\partial_t^\nu F(y',t_1)-\partial_t^\nu F(y',t_2)|&\le [\partial_t^\nu f]_{C^\alpha(\bar G_r)}
|t_1-t_2|^\alpha\int_0^1\rho^{\nu+\alpha-a-1}d\rho\\
&=
\frac{1}{\nu+\alpha-a}[\partial_t^\nu f]_{C^\alpha(\bar G_r)}|t_1-t_2|^\alpha.\end{align*}

Next, we consider the H\"older semi-norm of $\partial_t^\nu F$ in $y'$. 
Take any $(y'_1,t)$, $(y_2',t)$. Fix a $\nu\le m$ and note 
\begin{align*}
\partial_t^\nu f(y_1', t\rho)-\partial_t^\nu f(y_2',t\rho)=
\rho t\int_0^1\big(\partial_t^{\nu+1}f(y_1', st\rho)-\partial_t^{\nu+1} f(y_2',st\rho)\big)ds.
\end{align*}
By iterating this formula finitely many times, we have 
\begin{align*}
\big|\partial_t^\nu f(y_1', t\rho)-\partial_t^\nu f(y_2',t\rho)\big|\le 
(\rho t)^{m-\nu}\sup_{s\in [0,1]}\big|\partial_t^{m}f(y_1', st\rho)-\partial_t^{m} f(y_2',st\rho)\big|,
\end{align*}
and hence
\begin{align*}
&\left|\partial_t^{\nu}F(y'_1,t)-\partial_t^{\nu}F(y'_2,t)\right|\\
&\quad\leq t^{m-\nu}\int_0^1\rho^{m-a-1}
\sup_{s\in [0,1]}\big|\partial_t^{m}f(y_1', st\rho)-\partial_t^{m} f(y_2',st\rho)\big|d\rho.
\end{align*}
If $m>a$, then 
\begin{align*}\left|\partial_t^{\nu}F(y'_1,t)-\partial_t^{\nu}F(y'_2,t)\right|
&\leq [\partial_t^mf]_{C^\alpha(\bar G_r)}
|y_1'-y_2'|^{\alpha}\int_0^1\rho^{m-a-1} d\rho\\
&\le \frac{1}{m-a}[\partial_t^mf]_{C^\alpha(\bar G_r)}|y_1'-y_2'|^{\alpha}.\end{align*}
If $m=a$, we take an arbitrary $\delta\in (0,1)$. Then, 
$$\aligned 
&\big|\partial_t^{m}f(y_1', t\rho)-\partial_t^{m}f(y_2',t\rho) \big|\\
&\qquad\le \big|\partial_t^{m}f(y_1', t\rho)-\partial_t^{m}f(y_2',t\rho) \big|^{\delta}
\big|\partial_t^{m}f(y_1', t\rho)-\partial_t^{m}f(y_2',t\rho) \big|^{1-\delta}\\
&\qquad\le \big(|\partial_t^{m}f(y_1', t\rho)|+|\partial_t^{m}f(y_2',t\rho)| \big)^{\delta}
\big|\partial_t^{m}f(y_1', t\rho)-\partial_t^{m}f(y_2',t\rho) \big|^{1-\delta}\\
&\qquad\le 2[\partial_t^mf]_{C^\alpha(\bar G_r)}
t^{\delta\alpha}\rho^{\delta\alpha}|y_1'-y_2'|^{(1-\delta)\alpha},\endaligned$$
and hence 
\begin{align*}
\left|\partial_t^{\nu}F(y'_1,t)-\partial_t^{\nu}F(y'_2,t)\right|
&\leq 2[\partial_t^mf]_{C^\alpha(\bar G_r)}r^{\delta\alpha}
|y_1'-y_2'|^{(1-\delta)\alpha}\int_0^1\rho^{-1+\delta\alpha} d\rho\\
&\le \frac{2r^{\delta\alpha}}{\delta\alpha}[\partial_t^mf]_{C^\alpha(\bar G_r)}|y_1'-y_2'|^{(1-\delta)\alpha}.\end{align*}
Therefore, $\partial_t^{\nu}F\in C^{\epsilon}(\bar{G}_r)$ for any $\nu\le m$ 
and any $\epsilon\in (0,\alpha)$. 
Next, we consider $\nu\ge m+1$. We have
\begin{align*}
\big|\partial_t^\nu f(y_1', t\rho)-\partial_t^\nu f(y_2',t\rho)\big|\le 
[\partial_t^{\nu}f]_{C^\alpha(\bar G_r)}|y_1'-y_2'|^{\alpha},
\end{align*}
and then
\begin{align*}
\left|\partial_t^{\nu}F(y'_1,t)-\partial_t^{\nu}F(y'_2,t)\right|
\leq \frac{1}{\nu-a}[\partial_t^\nu f]_{C^\alpha(\bar G_r)}|y'_1-y'_2|^{\alpha}.\end{align*}
Hence, 
$\partial_t^\nu F\in C^\alpha(\bar G_1)$ for any $a+1\le \nu\le l$.
\end{proof}


\begin{thebibliography}{DG}

\bibitem{Anderson1982Invent} M. Anderson, \emph{Complete minimal varieties in 
hyperbolic space}, Invent. Math., 69(1982), 477-494. 

\bibitem{Anderson1983} M. Anderson, \emph{Complete minimal hypersurfaces in hyperbolic 
$n$-manifolds}, Comment. Math. Helv., 58(1983), 264-290. 

\bibitem{Anderson2003} M. Anderson, \emph{Boundary regularity, uniqueness and 
non-uniqueness for AH Einstein metrics on 4-manifolds}, 
Adv. Math., 179(2003), 205-249. 

\bibitem{ACF1982CMP} L. Andersson, P. Chru\'sciel, H. Friedrich, 
\emph{On the regularity of solutions to the Yamabe equation and the existence of 
smooth hyperboloidal initial data for Einstein's field equations}, 
Comm. Math. Phys., 149(1992), 587-612. 


\bibitem{Biquad2010} 
O. Biquard, M. Herzlich, 
\emph{Analyse sur un demi-espace hyperbolique et 
poly-homogeneite locale},  arXiv:1002.4106.

\bibitem{ChengYau1980CPAM} S.-Y. Cheng, S.-T. Yau, 
\emph{On the existence of a complete K\"ahler metric on non-compact complex 
manifolds and the regularity of Fefferman's equation}, 
Comm. Pure Appl. Math., 33(1980), 507-544. 

\bibitem{Chrusciel2005} 
P. Chru\'sciel, E. Delay, J. Lee, D. Skinner, 
\emph{Boundary regularity of conformally compact 
Einstein metrics}, J. Diff. Geom., 69(2005), 
111-136.

\bibitem{Fefferman1976} C. Fefferman, 
\emph{Monge-Amp\`ere equation, the Bergman kernel, and geometry of pseudoconvex domains}, 
Ann. Math., 103(1976), 395-416. 

\bibitem{Fefferman&Graham2002} C. Fefferman, C. R. Graham, 
\emph{$Q$-curvature and Poincar\'e metrics}, Math. Res. Lett., 9(2002), 139-151. 

\bibitem{Fefferman&Graham2012} C. Fefferman, C. R. Graham, 
\emph{The Ambient Metric}, Annals of Mathematics Studies, 178, Princeton University Press, 
Princeton, 2012.

\bibitem{GT1983} D. Gilbarg, N. Trudinger,
{\it Elliptic Partial Differential Equations of Elliptic Type},
Springer, Berlin, 1983.

\bibitem{Graham&Witten1999} C. R. Graham, E. Witten, 
\emph{Conformal anomaly of submanifold observables in AdS/CFT correspondence}, 
Nuclear Physics B, 546(1999), 52-64. 

\bibitem{Han&Jiang-analyticity} Q. Han, X. Jiang, 
\emph{The analyticity of minimal surfaces in the hyperbolic space}, preprint. 

\bibitem{HanKhuri2014CPDE} Q. Han, M. Khuri, {\it Existence and blow-up behavior for solutions
of the generalized Jang equation}, Comm. P.D.E., 38(2013), 2199-2237. 

\bibitem{Han16CalVar} Q. Han, W. Shen, Y. Wang, {\it Optimal regularity of minimal graphs in the hyperbolic 
space}, Cal. Var. \& P. D. E., 55(2016), no. 1, Art. 3, 19pp. 

\bibitem{HanWang} Q. Han, Z. Wang, \emph{Solutions of  the minimal surface equation and 
of the Monge-Amp\`{e}re equation near infinity}, preprint. 

\bibitem{Hardt&Lin1987} R. Hardt, F.-H. Lin, \emph{Regularity at infinity 
for area-minimizing hypersurfaces in hyperbolic space}, Invent. Math., 88(1987), 217-224. 


\bibitem{Hellimell2008} D. Helliwell, 
\emph{Boundary regularity for conformally compact Einstein metrics in 
even dimensions},
Comm. P.D.E., 33(2008),  
842-880.

\bibitem{JMR2016} 
T. Jeffres, R. Mazzeo, Y. Rubinstein, \emph{K\"ahler-Einstein metrics with edge singularities},
Ann. Math., 183(2016), 95-176.

\bibitem{JianWang2013JDG} H. Jian, X.-J. Wang, 
{\it Bernstein theorem and regularity for a class of Monge-Amp\`{e}re equations}, 
J. Diff. Geom., 93(2013), 431-469. 

\bibitem{JianWang2014Adv} H. Jian, X.-J. Wang, {\it Optimal boundary regularity
for nonlinear singular elliptic equations}, Adv. Math., 251(2014), 111-126. 

\bibitem{LeeMelrose1982} J. Lee, R. Melrose, 
\emph{Boundary behavior of the complex Monge-Amp\`ere equation}, 
Acta Math., 148(1982), 159-192. 

\bibitem{Lin1989CPAM} F.-H. Lin, \emph{Asymptotic behavior of area-minimizing 
currents in hyperbolic space}, Comm. Pure Appl. Math., 42(1989), 229-242. 

\bibitem{Lin1989Invent} F.-H. Lin,
{\it On the Dirichlet problem for minimal graphs in hyperbolic space},
Invent. Math., 96(1989), 593-612.

\bibitem{Lin2012Invent} F.-H. Lin,
{\it Erratum: On the Dirichlet problem for minimal graphs in hyperbolic space},
Invent. Math., 187(2012), 755-757.

\bibitem{Loewner&Nirenberg1974} C. Loewner, L. Nirenberg, 
\emph{Partial differential equations invariant under conformal or projective transformations}, 
Contributions to Analysis, 245-272, Academic Press, New York, 1974. 

\bibitem{Mazzeo1991} R. Mazzeo, 
\emph{Regularity for the singular Yamabe problem}, Indiana Univ. Math. Journal, 40(1991), 1277-1299.



\bibitem{Tonegawa1996MathZ} Y. Tonegawa, 
\emph{Existence and regularity of constant mean curvature hypersurfaces in hyperbolic space}, 
Math. Z., 221(1996), 591-615. 

\end{thebibliography}
\end{document}